\documentclass[11pt,reqno]{amsart}
\usepackage[utf8]{inputenc}
\usepackage{enumitem}
\usepackage{amssymb}
\usepackage{mathtools}
\usepackage{pdfsync}
\usepackage[english]{babel}
\usepackage[pdftex,pagebackref,colorlinks=true,urlcolor=blue,linkcolor=blue,citecolor=blue]{hyperref}
\usepackage[capitalize]{cleveref}
\usepackage{fullpage}
\usepackage{color}
\usepackage{amsmath}
\usepackage{amsfonts}
\usepackage{mathrsfs}
\usepackage{t1enc, graphicx}
\usepackage{verbatim}
\usepackage{bbm}
\usepackage[colorinlistoftodos]{todonotes}
\usepackage{enumitem}
\usepackage{bm}
\usepackage{tikz-cd}

\newcommand{\Z}{\mathbb{Z}}
\newcommand{\N}{\mathbb{N}}

\newcommand{\RP}{\bold{RP}}


\newcommand{\bQ}{\bm{Q}}



\newtheorem{theorem}{Theorem}[section]
\newtheorem{proposition}[theorem]{Proposition}

\newtheorem{problem}[theorem]{Problem}
\newtheorem{lemma}[theorem]{Lemma}

\newtheorem{corollary}[theorem]{Corollary}
\theoremstyle{definition}
\newtheorem{definition}[theorem]{Definition}
\newtheorem*{definition*}{Definition}

\newtheorem{remark}[theorem]{Remark}
\theoremstyle{remark}



\title{Joint transitivity for linear iterates}

\date{}

\begin{document}

\author{Sebasti{\'a}n Donoso, Andreas Koutsogiannis and Wenbo~Sun}

\address[Sebasti{\'a}n Donoso]{Departamento de Ingenier\'{\i}a Matem\'atica and Centro de Modelamiento Matem{\'a}tico, Universidad de Chile \& IRL 2807 - CNRS, Beauchef 851, Santiago, Chile} \email{sdonoso@dim.uchile.cl}

\address[Andreas Koutsogiannis]{
Department of Mathematics, Aristotle University of Thessaloniki, Thessaloniki, 54124, Greece}

\email{akoutsogiannis@math.auth.gr}

\address[Wenbo Sun]{Department of Mathematics, Virginia Tech, 225 Stanger Street, Blacksburg, VA, 24061, USA}
\email{swenbo@vt.edu}

\thanks{The first author was partially funded by Centro de Modelamiento Matemático (CMM) FB210005, BASAL funds for centers of excellence from ANID-Chile and ANID/Fondecyt/1241346. The second author was partially supported by the ``Excellence in Research'' program of the Special Account for Research Funds AUTH (Code 10316). The third author was partially supported by the NSF Grant DMS-2247331.}

\subjclass[2020]{Primary: 37B05; Secondary: 37B02, 37B20}

\keywords{Joint transitivity, topological dynamics, transitivity, proximal relations.}

\maketitle
\begin{abstract} 
\noindent We establish sufficient and necessary conditions for the joint transitivity of linear iterates in a minimal topological dynamical system with commuting transformations. This result provides the first topological analogue of the classical Berend and Bergelson joint ergodicity criterion in measure-preserving systems. 
\end{abstract}

\section{Introduction - Main result}

\subsection{The joint ergodicity problem} Let $(X,\mathcal{B},\mu)$ be a standard probability space equipped with an invertible measure-preserving transformation $T:X\to X$ (that is, $\mu(TA)=\mu(A)$ for every $A\in \mathcal{B}$). We say that the quadruple $(X,\mathcal{B},\mu,T)$ is \emph{a measure-preserving system}. In particular, the latter is called \emph{ergodic} or \emph{weakly mixing} if the transformation $T$ is ergodic (i.e., every $T$-invariant set $A\in \mathcal{B}$ satisfies $\mu(A)\in \{0,1\}$) or weakly mixing (i.e., the transformation $T\times T,$ acting on the Cartesian square $X^2:=X\times X$, is ergodic) respectively. 

Given a weakly mixing measure-preserving system $(X,\mathcal{B},\mu,T)$ and distinct non-zero integers $a_1,\ldots,a_d,$ we have the following independence property of the sequences $(T^{a_i n})_n,$ $1\leq i\leq d.$\footnote{Throughout this paper, whenever a sequence is written as $(a(n))_{n}$ without specifying the range of $n$, it is understood as a $\mathbb{Z}$-sequence $(a(n))_{n\in\mathbb{Z}}$.}

\begin{theorem}[\cite{Furstenberg_ergodic_szemeredi:1977}]\label{T:F1}
Let $(X,\mathcal{B},\mu,T)$ be a weakly mixing measure-preserving system. Then, for any $d\in \N, $ any distinct non-zero integers $a_1,\ldots,a_d,$ and any $f_1,\ldots,f_d\in L^\infty(\mu)$ we have 
\begin{equation}\label{E:F1}\lim_{N\to\infty}\frac{1}{N}\sum_{n=1}^N T^{a_1 n} f_1\cdot\ldots\cdot T^{a_d n}f_d=\int_X f_1\;d\mu\cdot\ldots\cdot\int_X f_d\;d\mu,\end{equation}
where the convergence takes place in $L^2(\mu).$
\end{theorem}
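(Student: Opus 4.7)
The plan is to reduce to the mean-zero case and then induct on $d$, using van der Corput's estimate to trade a multiple average with $d$ functions for an average of multiple averages with $d-1$ functions, at which point the inductive hypothesis together with weak mixing finishes the argument.

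By multilinearity, writing each $f_i$ as $(f_i - \int f_i\,d\mu) + \int f_i\,d\mu$ and expanding, it suffices to prove that if some $f_{i_0}$ satisfies $\int f_{i_0}\,d\mu = 0$, then the average in \eqref{E:F1} tends to $0$ in $L^2(\mu)$. Permuting the indices and the integers $a_i$ accordingly, we may assume $i_0 = d$. We proceed by induction on $d$. The base case $d=1$ is the mean ergodic theorem applied to the transformation $T^{a_1}$, which is ergodic since $T$ is weakly mixing and $a_1 \neq 0$.

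For the inductive step, set $u_n := \prod_{i=1}^{d} T^{a_i n} f_i \in L^2(\mu)$ and compute
\[
\langle u_{n+h}, u_n \rangle = \int \prod_{i=1}^{d} T^{a_i n}\bigl(T^{a_i h} f_i \cdot \overline{f_i}\bigr)\,d\mu = \int \bigl(T^{a_d h} f_d \cdot \overline{f_d}\bigr) \prod_{i=1}^{d-1} T^{(a_i - a_d) n}\bigl(T^{a_i h} f_i \cdot \overline{f_i}\bigr)\,d\mu,
\]
where in the second equality I apply the measure-preserving transformation $T^{-a_d n}$ inside the integral. Since the integers $a_1 - a_d, \ldots, a_{d-1} - a_d$ are non-zero and distinct, the inductive hypothesis applies to the inner product, yielding
\[
\lim_{N\to\infty} \frac{1}{N}\sum_{n=1}^{N} \langle u_{n+h}, u_n \rangle = \prod_{i=1}^{d} \int T^{a_i h} f_i \cdot \overline{f_i}\,d\mu
\]
for each fixed $h$. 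Weak mixing of $T^{a_d}$ (which follows from weak mixing of $T$ since $a_d \neq 0$) gives that the Cesàro averages of $\bigl|\int T^{a_d h} f_d \cdot \overline{f_d}\,d\mu - \bigl|\int f_d\,d\mu\bigr|^2\bigr|$ in $h$ tend to zero, and since $\int f_d\,d\mu = 0$, the averages in $h$ of $\bigl|\int T^{a_d h} f_d \cdot \overline{f_d}\,d\mu\bigr|$ tend to $0$. As the remaining factors are uniformly bounded, the full $h$-average of the correlations vanishes, and van der Corput's lemma delivers $\frac{1}{N}\sum_{n=1}^{N} u_n \to 0$ in $L^2(\mu)$.

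The main technical obstacle is the bookkeeping in the inductive step: one must verify that the integers $a_i - a_d$ satisfy the hypotheses of the inductive claim (non-zero and distinct), and that the auxiliary functions $T^{a_i h} f_i \cdot \overline{f_i}$ remain bounded uniformly in $h$ so that van der Corput's estimate can be applied cleanly. Beyond this, the argument is a standard PET-style descent, and no subtle measure-theoretic input is required beyond the definition of weak mixing and the mean ergodic theorem.
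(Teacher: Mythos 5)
The paper does not prove this statement: Theorem~\ref{T:F1} is quoted from Furstenberg's 1977 paper as background, so there is no in-paper argument to compare yours against. Your proof is the standard van der Corput induction (weak mixing of all orders) and is essentially correct: the correlation computation $\langle u_{n+h},u_n\rangle=\int (T^{a_dh}f_d\overline{f_d})\prod_{i<d}T^{(a_i-a_d)n}(T^{a_ih}f_i\overline{f_i})\,d\mu$ is right, the integers $a_i-a_d$ are indeed distinct and non-zero, the Ces\`aro decay of $|\int T^{a_dh}f_d\cdot\overline{f_d}\,d\mu|$ is exactly the $L^2$ characterization of weak mixing of $T^{a_d}$ applied to the mean-zero $f_d$, and the Hilbert-space van der Corput lemma then applies since $\sup_n\|u_n\|_{L^2}\le\prod_i\|f_i\|_\infty<\infty$.

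One organizational point you should tighten. You announce that the statement being proved by induction on $d$ is the \emph{mean-zero} version (the average tends to $0$ when some $f_{i_0}$ has zero integral), but in the inductive step you invoke the induction hypothesis to conclude that the $(d-1)$-fold average of the functions $T^{a_ih}f_i\overline{f_i}$ converges to the \emph{product of their integrals} --- and these functions are not mean-zero. That conclusion is the full statement \eqref{E:F1} for $d-1$, which does follow from the mean-zero version for all $d'\le d-1$ by re-running your multilinear decomposition, but as written the induction hypothesis does not literally cover it. The clean fix is to induct on the full statement \eqref{E:F1}: for each $d$, expand $\prod_i\bigl((f_i-\int f_i\,d\mu)+\int f_i\,d\mu\bigr)$, dispose of the terms with fewer than $d$ non-constant factors by the induction hypothesis for smaller $d'$ (these arise in your expansion too and are silently absorbed), and treat the single all-mean-zero term by the van der Corput step exactly as you do. With that restructuring the argument is complete.
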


This result, in particular its recurrence reformulation on $\mathcal{B}$-measurable sets of positive measure, is a crucial ingredient in Furstenberg's approach in proving Szemer\'edi's theorem (that is, every subset of natural numbers of positive upper density contains arbitrarily long arithmetic progressions) by recasting it as a recurrence problem. 

Later, in \cite{Bergelson_WM_PET:1987}, Bergelson extended Theorem~\ref{T:F1} to essentially distinct integer polynomial iterates. The convergence of general multiple ergodic averages for various classes of iterates to the right-hand side of \eqref{E:F1}, also known as the ``expected limit,'' developed to be a topic on its own (e.g., see \cite{Bergelson_WM_PET:1987,Bergelson_Moreira_Richter_mult_averages_conv_recurrence_comb_app:2024,Chu_Frantzikinakis_Host_ergodic_averages_distinct_degree:2011,Donoso_Ferre_Koutsogiannis_Sun_multicorr_joint_erg:2024,Donoso_Koutsogiannis_Sun_pointwise_sublinear:2020,Donoso_Koutsogiannis_Sun_seminorms_polynomials_joint_ergodicity:2022,Frantzikinakis_mult_recurrence_Hardy_poly_growth:2010,Frantzikinakis_multidim_Szemeredi_Hardy:2015,Frantzikinakis_joint_ergodicity_sequences:2023,Frantzikinakis_Kra_averages_product_integrals:2005,Karageorgos_Koutsogiannis_integer_indep_poly_averages_and_primes:2019,Koutsogiannis_integer_poly_correlation:2018,Koutsogiannis_multiple_averages_variable_poly:2022,Koutsogiannis_Sun_total_joint_ergodicity:2023,Tsinas_joint_erg_Hardy:2023} for various results on polynomial and  Hardy field, of polynomial growth, functions); the one of \emph{joint ergodicity}.

\begin{definition} Let $(X,\mathcal{B},\mu,T_{1},\dots,T_{d})$ be a measure-preserving system with commuting and invertible transformations\footnote{ Naturally, by this we mean that $(X,\mathcal{B},\mu)$ is a standard probability space and $T_1,\ldots, T_d:X\to X$ are invertible measure-preserving transformations with $T_i T_j=T_j T_i.$} and $(a_1(n))_n,\ldots,(a_d(n))_n$ be integer-valued sequences.  We say that $(T_1^{a_1(n)})_n,\ldots,(T_d^{a_d(n)})_n$ are \emph{jointly ergodic (for $\mu$)} if for any $f_1,\ldots, f_d\in L^\infty(\mu)$ we have 
\begin{equation}\label{E:JE2}
\lim_{N\to\infty}\frac{1}{N}\sum_{n=1}^N  T_1^{a_1(n)}f_1\cdot\ldots\cdot T_d^{a_d(n)} f_d=\int_X f_1\;d\mu\cdot\ldots\cdot\int_X f_d\;d\mu,\end{equation} where the convergence takes place in $L^2(\mu)$ (i.e., the $L^2(\mu)$-limit of the left-hand side of \eqref{E:JE2} exists and it takes the value of the right-hand side).
\end{definition}

The first characterization of joint ergodicity is due to Berend and Bergelson.

\begin{theorem}[\cite{Berend_Bergelson_joint_ergodicity:1984}]\label{T:BB}
 Let $(X,\mathcal{B},\mu,T_{1},\dots,T_{d})$ be a measure-preserving system with commuting and invertible transformations. 
 Then $(T_{1}^n)_n,\dots,(T_{d}^n)_n$ are jointly ergodic (for $\mu$) if and only if both of the following conditions are satisfied:
	\begin{itemize}
		\item[$(i)$] $T_{i}T^{-1}_{j}$ is ergodic (for $\mu$) for all $1\leq i,j\leq d,$ $i\neq j$; and
		\item[$(ii)$] $T_{1}\times \dots\times T_{d}$ is ergodic (for $\mu^{\otimes d}$).
	\end{itemize}
\end{theorem}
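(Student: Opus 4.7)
The plan is to attack necessity and sufficiency separately, with necessity being elementary and sufficiency requiring van der Corput together with a Kronecker factor analysis.

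For necessity of (i), I argue by contrapositive. Suppose $T_iT_j^{-1}$ is not ergodic for some $i\ne j$ and choose a non-constant $g\in L^2(\mu)$ with $T_iT_j^{-1}g=g$, equivalently $T_i^ng=T_j^ng$ for every $n\in\Z$. Substituting $f_i=g$, $f_j=\bar g$, and $f_k=1$ for $k\notin\{i,j\}$ in \eqref{E:JE2} and using commutativity, the $n$-th term reduces to $T_j^n|g|^2$. The Cesaro averages converge to $\int|g|^2\,d\mu$ by the mean ergodic theorem (note $T_j$ is ergodic, as a consequence of joint ergodicity applied to the single sequence $(T_j^n)$), whereas joint ergodicity demands the limit be $|\int g\,d\mu|^2$. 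The Cauchy--Schwarz equality case then forces $g$ to be constant, a contradiction. For necessity of (ii), I would combine this (i) with a spectral argument: letting $\sigma_{g_i}$ denote the spectral measure of $g_i\in L^2(\mu)$ with respect to $T_i$, a Wiener-type computation identifies $\lim_N\|\frac{1}{N}\sum_{n=1}^N(T_1\times\cdots\times T_d)^n(g_1\otimes\cdots\otimes g_d)\|_{L^2(\mu^{\otimes d})}^2$ with the mass of the product spectral measure $\prod_i\sigma_{g_i}$ on the hyperplane $\{t_1+\cdots+t_d=0\}\subset\T^d$. Joint ergodicity, through an intertwined computation in $L^2(\mu)$, pins this mass down to $\prod_i\sigma_{g_i}(\{0\})=\prod_i|\int g_i\,d\mu|^2$, which is precisely the ergodicity of $T_1\times\cdots\times T_d$ on $\mu^{\otimes d}$.

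For sufficiency, I would use van der Corput to reduce to a Kronecker-type factor. Iterated applications of the van der Corput inequality, using commutativity at each step to rewrite the products $T_i^nT_d^{-n}$ as $(T_iT_d^{-1})^n$ (ergodic by (i)), reduce the problem to the case where each $f_i$ is measurable with respect to a joint Kronecker factor $\mathcal{K}$. By Halmos--von Neumann, on $\mathcal{K}$ the system is measurably isomorphic to a rotation on a compact abelian group $G$ with $T_ix=x+\alpha_i$. Joint ergodicity then reduces to a direct character computation: for any non-trivial tuple $(\chi_1,\ldots,\chi_d)\in\widehat G^d$, the average $\frac{1}{N}\sum_n\prod_i\chi_i(x+n\alpha_i)$ equals $\prod_i\chi_i(x)\cdot\frac{1}{N}\sum_n e^{2\pi in(\phi_1+\cdots+\phi_d)}$ with $\chi_i(\alpha_i)=e^{2\pi i\phi_i}$, which tends to $0$ unless $\phi_1+\cdots+\phi_d\in\Z$. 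Condition (ii), which amounts to saying that $(\alpha_1,\ldots,\alpha_d)$ generates a dense subgroup of $G^d$, forces this resonance to occur only for the trivial tuple of characters, yielding the desired Fourier cancellation.

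The main obstacle is the sufficiency reduction to the Kronecker factor: identifying the correct joint characteristic factor and verifying that both (i) and (ii) propagate correctly through the van der Corput steps requires careful bookkeeping. The role of (i) is to ensure that the shifted operators $T_iT_d^{-1}$ are ergodic so that each van der Corput reduction closes cleanly, while (ii) is what ultimately makes the final Fourier computation on the group rotation vanish; the interplay of the two hypotheses is the main technical content of the argument.
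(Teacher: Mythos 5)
First, a caveat: the paper does not prove Theorem~\ref{T:BB}; it quotes it from \cite{Berend_Bergelson_joint_ergodicity:1984} as background, so your proposal can only be measured against the classical argument (and against the topological analogue the paper actually develops in Section~\ref{S:3}). Your necessity argument for (i) is correct and standard. The necessity of (ii), however, has a real gap: the phrase ``joint ergodicity, through an intertwined computation in $L^2(\mu)$, pins this mass down'' is precisely the step that needs proof. Joint ergodicity is a statement about the diagonal averages $\frac{1}{N}\sum_n\prod_iT_i^ng_i$ in $L^2(\mu)$, whereas the hyperplane mass of $\prod_i\sigma_{g_i}$ equals the squared norm of the projection of $g_1\otimes\cdots\otimes g_d$ onto the $(T_1\times\cdots\times T_d)$-invariant functions in $L^2(\mu^{\otimes d})$ --- a quantity living on $X^d$; there is no continuous restriction-to-the-diagonal operator linking the two. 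The standard bridge (and what Berend and Bergelson do) is via eigenfunctions: since each $T_i$ is ergodic, the invariant functions of the product are spanned by tensors $\phi_1\otimes\cdots\otimes\phi_d$ of unimodular eigenfunctions whose eigenvalues satisfy $\lambda_1\cdots\lambda_d=1$; applying joint ergodicity to such a tuple gives $\prod_i\phi_i=\prod_i\int\phi_i\,d\mu$, and comparing moduli forces every $\phi_i$ to be constant. You should make this explicit rather than appeal to an unspecified computation.

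The sufficiency direction also does not close as described. One application of van der Corput, followed by composing with $T_d^{-n}$, turns $\langle u_{n+h},u_n\rangle$ into an average of $\prod_{i<d}(T_iT_d^{-1})^n(\bar f_iT_i^hf_i)$ integrated against $\bar f_dT_d^hf_d$, so you need (weak) joint ergodicity of the $d-1$ transformations $R_i\coloneqq T_iT_d^{-1}$. Condition (i) does give that $R_iR_j^{-1}=T_iT_j^{-1}$ is ergodic, but the inductive hypothesis also requires that $R_1\times\cdots\times R_{d-1}$ be ergodic on $X^{d-1}$, and this is \emph{not} among your hypotheses: it must be derived from (i) and (ii) by descending to the Kronecker factor and verifying the eigenvalue (non-resonance) condition for the difference rotations $\alpha_i-\alpha_d$. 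This derivation is exactly the measurable counterpart of \cref{trfg} in the paper, and it is the crux of the induction rather than ``careful bookkeeping.'' Once that lemma is supplied, your reduction to the Kronecker factor and the final character computation are correct: ergodicity of $T_1\times\cdots\times T_d$ on the group rotation is equivalent to $\chi_1(\alpha_1)\cdots\chi_d(\alpha_d)\neq1$ for every non-trivial tuple of characters, which kills the resonant terms.
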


There is a plethora of analogous joint ergodicity characterizations for generalized linear functions \cite{Bergelson_Leibman_Son_joint_erg_generalized_linear:2016}, polynomial functions \cite{Donoso_Ferre_Koutsogiannis_Sun_multicorr_joint_erg:2024, Donoso_Koutsogiannis_Sun_seminorms_polynomials_joint_ergodicity:2022, Frantzikinakis_Kuca_joint_erg_commuting:2022, Frantzikinakis_Kuca_seminorm_control_commuting_poly:2022}, and Hardy field functions \cite{Donoso_Koutsogiannis_Kuca_Tsinas_Sun_multiple_Hardy, Donoso_Koutsogiannis_Sun_joint_erg_poly_growth:2023}. 

The objective of this article is to prove the corresponding to Theorem~\ref{T:BB} result in the topological setting.

\subsection{The joint transitivity problem}

A \emph{$\Z^{k}$-system} is a tuple $(X,S_1,\ldots,S_k)$ where $X$ is a compact metric space and $S_{1},\dots,S_{k}\colon X\to X$ are homeomorphisms with $S_i S_j= S_j S_i$. We let $\langle S_1,\ldots,S_k\rangle$ denote the group generated by $S_1,\ldots,S_k$. 

In order to state the result corresponding to Theorem~\ref{T:BB} in the topological setting, we start with the notion of {\em joint transitivity}.

\begin{definition}
Let $(X,S_1,\ldots,S_k)$ be a $\Z^k$-system, $T_1,\ldots,T_d\in \langle S_1,\ldots,S_k\rangle$ and $(a_1(n))_n,\ldots,$ $(a_d(n))_n$ be integer-valued sequences. We say that $(T_1^{a_1(n)})_n,\ldots,(T_d^{a_d(n)})_n$ are \emph{jointly transitive} if there is a $G_{\delta}$-dense subset $X_0\subseteq X$ such that for all $x\in X_0$, the set
\[ \{ (T_1^{a_1(n)}x,\ldots,T_d^{a_d(n)}x) : n\in \Z\} \]
is dense in $X^d$.\footnote{ In \cite{Huang_Shao_Ye_top_correspondence_multiple_averages:2019}, the authors call the family $(T^{n}_{1},\dots,T^{n}_{d})$ to be \emph{$\Delta$-transitive}.  We use the term ``joint transitivity'' to emphasize the direct parallelism of Theorem~\ref{T:BB} to \cref{mainbb} in the measure theoretic setting on ``joint ergodicity.''}
\end{definition}

We call a $\Z^k$-system $(X,S_1,\ldots,S_k)$ {\em minimal} if, for any point $x\in X$, its orbit $\{ S_1^{m_1}\cdot\ldots\cdot S_k^{m_k} x: (m_1,\ldots,m_k)\in \Z^k\}$ is dense in $X$.  A system is {\em transitive} if there exists a point $x\in X$ whose orbit is dense; we say that any such point $x$ is a \emph{transitive} point (of $(X,S_1,\ldots,S_k)$).  The pioneer work of Glasner \cite{Glasner_top_erg_decomposition:1994}, established joint transitivity for sequences given by iterates of powers of a transformation in a minimal and (topologically) weakly mixing $\mathbb{Z}$-system $(X,T)$ (meaning that the product system $(X\times X, T\times T)$ is transitive).

\begin{theorem}[\cite{Glasner_top_erg_decomposition:1994}]\label{ttgl}
  Let $(X,T)$ be a minimal weakly mixing $\mathbb{Z}$-system. Then, for any $d\in \N$, and distinct nonzero integers $a_1,\ldots,a_d$, the sequences $(T^{a_1 n})_n,\ldots,(T^{a_d n})_n$ are jointly transitive.  
\end{theorem}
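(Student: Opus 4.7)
The plan is to use the Baire category theorem to reduce the joint transitivity statement to a multiple transitivity assertion, and then to exploit topological weak mixing of $(X,T)$.

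First, I would fix a countable basis $\{U_{1,k}\times\cdots\times U_{d,k}\}_{k\in\N}$ of nonempty open sets for $X^d$, and define
\[
W_k \;:=\; \{x\in X : \exists\, n\in\Z,\ T^{a_in}x\in U_{i,k}\text{ for every }1\le i\le d\}.
\]
Each $W_k$ is open in $X$, and the set $X_0:=\bigcap_{k\in\N} W_k$ coincides with the set of points whose joint $d$-orbit is dense in $X^d$. By the Baire category theorem, it suffices to show that each $W_k$ is dense in $X$, which amounts to the following \emph{multiple transitivity} statement: for any nonempty open sets $U,U_1,\ldots,U_d\subseteq X$ there exist $x\in U$ and $n\in\Z$ with $T^{a_in}x\in U_i$ for all $i$.

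Second, I would establish this multiple transitivity by first proving that the product $\Z$-system $(X^d,\,S)$ with $S:=T^{a_1}\times\cdots\times T^{a_d}$ is transitive. The key intermediate fact is that $T^k$ itself is minimal and topologically weakly mixing for every $k\ne 0$, since otherwise $(X,T)$ would admit a nontrivial equicontinuous factor onto $\Z/k\Z$, contradicting weak mixing. A Furstenberg-type inductive argument, in which weak mixing of $T^{a_i-a_j}$ ensures nonempty intersection of return-time sets across distinct coordinates, then yields transitivity of $(X^d,S)$.

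Third, and most delicately, I would upgrade $S$-transitivity of $X^d$ to multiple transitivity for diagonal initial conditions. The structural input is that for a minimal topologically weakly mixing system the regionally proximal relation $Q(X)$ equals $X\times X$, so arbitrary $d$-tuples can be approximated by configurations arising from a common base point. Combined with $S$-transitivity of $X^d$, this should produce, for any prescribed $U,U_1,\ldots,U_d$, the desired diagonal starting point $x\in U$ and iterate $n\in\Z$.

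The main obstacle is precisely this third step: not every orbit of $(X^d,S)$ originates from the diagonal $\Delta(X)=\{(x,\ldots,x):x\in X\}$, and extracting diagonal-initial dense orbits requires the full structural theory of minimal topologically weakly mixing systems. A cleaner alternative path, which would directly yield the stronger conclusion $X_0=X$, would be to prove that $(X^d,S)$ is itself \emph{minimal} via Glasner's enveloping semigroup techniques, using that any $S$-minimal subsystem of $X^d$ must project onto $X$ in each coordinate by minimality of the individual $T^{a_i}$; this minimality would immediately bypass the Baire reduction altogether.
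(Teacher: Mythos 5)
The paper does not prove this statement itself: it quotes it from Glasner, and in the present paper it is recovered as the special case of \cref{mainbb} with $k=1$ and $T_i=T^{a_i}$ (condition $(i)$ holds because a minimal weakly mixing system is totally transitive, and condition $(ii)$ is Furstenberg's classical fact that finite products of powers of a weakly mixing transformation are transitive). So the relevant comparison is with the proof of \cref{mainbb}. Your first two steps are sound and standard: the Baire reduction is exactly \cref{le:eqtr}, and transitivity of $(X^{d},T^{a_{1}}\times\cdots\times T^{a_{d}})$ follows from the classical intersection property of return-time sets for weakly mixing systems.

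The genuine gap is your third step, which is the entire content of the theorem and which you leave at the level of ``this should produce the desired diagonal starting point.'' Transitivity of the product system only yields, for given open sets, some $n$ and points $x_{i}\in U_{i}$ with $T^{a_{i}n}x_{i}\in V_{i}$, with a \emph{different} $x_{i}$ in each coordinate; manufacturing a single $x\in U$ with $T^{a_{i}n}x\in U_{i}$ for all $i$ simultaneously is precisely what has to be proved. Observing that $Q(X)=X\times X$ for minimal weakly mixing systems is the right raw material, but it must be converted into information about the product system: in this paper that conversion is \cref{lemma:product_RP} (the inclusion $\RP_{T_{1},T_{1}}(X)\times\cdots\times\RP_{T_{d},T_{d}}(X)\subseteq\RP_{T_{1}\times\cdots\times T_{d}}(X^{d})$, proved via the difference-set construction of \cref{lem:Delta_set} and the Poincar\'e-recurrence argument of \cref{lem:recurrence_RP}), followed by the induction on $d$ that passes to the maps $T_{i}T_{1}^{-1}$ in the proof of \cref{mainbb}. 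Your sketch contains no substitute for any of this. Your proposed shortcut --- proving that $(X^{d},T^{a_{1}}\times\cdots\times T^{a_{d}})$ is \emph{minimal} --- is a strictly stronger assertion than the theorem (it would give joint transitivity from \emph{every} point rather than from a residual set), and the justification offered does not support it: a closed invariant subset of $X^{d}$ can project onto $X$ in every coordinate and still be proper (graphs and joining-type sets do exactly this), so excluding such subsets is again the heart of the matter rather than a way around it.
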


 Theorem \ref{ttgl} can be thought as the topological analogue of \cref{T:F1}.
This result was extended by Huang, Shao and Ye in 
\cite{Huang_Shao_Ye_top_correspondence_multiple_averages:2019}, who obtained topogically joint ergodicity results under weakly mixing assumptions of several transformations but were able to deal with polynomial expressions and nilpotent group actions. Some follow-up works on this line are given in \cite{Cao_Shao_top_mild_mixing_poly:2022,Zhang_Zhao_topological_mult_rec_WM_GP:2021}.

We are now ready to state our main result, which can be regarded as the topological version of Theorem~\ref{T:BB}.

 \begin{theorem}\label{mainbb}
Let $(X,S_1,\ldots,S_k)$ be a minimal system and $T_1,\ldots,T_d \in \langle S_1,\ldots,S_k\rangle$. Then $(T_1^n)_n,\ldots,(T_d^n)_n$ are jointly transitive if and only if both of the following conditions are satisfied:
\begin{enumerate}[label=(\roman*)]
    \item[$(i)$] $(X,T_i^{-1}T_j)$ is transitive for all $1\leq i, j\leq d,\; i\neq j$; and
    
    \item[$(ii)$] $(X^d,T_1\times \cdots \times T_d)$ is transitive. 
\end{enumerate}
\end{theorem}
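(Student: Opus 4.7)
For the \textbf{necessity}, condition (ii) is immediate: for any $x$ in the dense $G_\delta$ of joint transitivity points, the orbit of $(x,\ldots,x)\in X^d$ under $T_1\times\cdots\times T_d$ is dense in $X^d$, so $(X^d, T_1\times\cdots\times T_d)$ has a transitive point. For (i), projecting onto the $(i,j)$-coordinates gives density of $\{(T_i^n x, T_j^n x)\}_n$ in $X^2$; the task is to upgrade this to transitivity of the single transformation $T_i^{-1}T_j$ on $X$. My plan is to exploit the maximal equicontinuous factor $\pi\colon X\to Z$ of the ambient minimal system: the $T_l$'s descend to rotations by $\alpha_l$ on the compact abelian group $Z$, and the projected density of $\{n(\alpha_i,\alpha_j)\}$ in $Z^2$ yields, via the continuous homomorphism $(a,b)\mapsto b-a$, density of $\{n(\alpha_j-\alpha_i)\}$ in $Z$, i.e.\ minimality of $(Z,\bar T_i^{-1}\bar T_j)$. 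Minimality of $R := T_i^{-1}T_j$ on $Z$ together with minimality of the ambient group action on $X$ and the commutation of $R$ with all $S_\ell$ then yields transitivity of $(X,R)$ via a standard enveloping semigroup argument.

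For the \textbf{sufficiency}, assume (i) and (ii) and again pass to $\pi\colon X\to Z$. Hypothesis (ii) immediately gives joint transitivity on $Z$: the cyclic subgroup $\{n(\alpha_1,\ldots,\alpha_d):n\in\Z\}$ is dense in $Z^d$, and since this subgroup is invariant under the diagonal action of $Z$ on $Z^d$, joint transitivity of $(\bar T_l^n)_l$ follows for every $z\in Z$. The heart of the sufficiency is to lift joint transitivity from $Z$ back to $X$. My plan is to invoke the Furstenberg--Veech structure theorem: decompose the tower above $Z$ (equivalently, from the maximal distal factor up to $X$) into alternating isometric (group) and weakly mixing extensions, and lift joint transitivity across each step. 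At an isometric extension, one lifts using a direct density argument in the structure group, combined with (ii). At a weakly mixing extension, one adapts Glasner's proof of \cref{ttgl} (and its generalization by Huang--Shao--Ye \cite{Huang_Shao_Ye_top_correspondence_multiple_averages:2019}) to a relative setting, where condition (i) rules out the remaining pair-wise equicontinuous obstructions that would otherwise trap the joint orbit in a proper sub-extension.

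The \textbf{main obstacle} I anticipate is precisely the lifting through relatively weakly mixing extensions: one must show that a transitive joint orbit on the factor fills the entire fibre product in $X^d$. The delicacy is that (i) and (ii) play distinct roles --- (ii) controls the overall projection to the equicontinuous factor and provides the global transitivity, while (i) rules out pair-wise obstructions coming from isometric quotients detected by $T_i^{-1}T_j$. Carrying this out will likely require an Ellis (enveloping) semigroup analysis, a careful use of regular isometric (RIC) extensions, and a transfinite induction along the structure tower, with the base case being the equicontinuous factor treated above.
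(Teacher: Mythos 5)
Your necessity argument for condition $(i)$ takes a detour that ends in a false step. From joint transitivity you correctly extract that $\{(T_i^nx,T_j^nx):n\in\Z\}$ is dense in $X^2$ for some $x$, but instead of using this directly you pass to the maximal equicontinuous factor $Z$ and assert that minimality of the induced rotation on $Z$, together with minimality of the ambient action and commutation, forces $(X,T_i^{-1}T_j)$ to be transitive ``via a standard enveloping semigroup argument.'' That principle is false: let $Y$ be minimal weakly mixing for $T$, let $X=Y\times Y$ with the minimal $\Z^2$-action generated by $T\times\mathrm{id}$ and $\mathrm{id}\times T$, and let $R=T\times\mathrm{id}$; then $Z$ is a point, $R$ commutes with the generators, yet $(X,R)$ is not transitive. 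The fix is immediate and is what the paper does: if $T_i^nx\in U_0$ and $T_j^nx\in U_1$, then $T_i^nx\in U_0\cap (T_i^{-1}T_j)^{-n}U_1$, so $(X,T_i^{-1}T_j)$ is topologically transitive, and \cref{le:eqtr} (using minimality of the ambient action) upgrades this to the existence of a transitive point. Condition $(ii)$ you handle exactly as the paper does.

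For sufficiency --- which is where all the content of the theorem lies --- your proposal is a program rather than a proof: the step you yourself flag as the ``main obstacle,'' lifting joint transitivity from the maximal equicontinuous factor back to $X$ through the Furstenberg--Veech tower, \emph{is} the theorem, and the route you sketch has a concrete difficulty you do not address. The tower above $Z$ may contain further isometric extensions, and it is not explained why hypotheses $(i)$ and $(ii)$, which are conditions on the global system, control the obstructions arising at those intermediate levels; ``adapting Glasner's proof in a relative setting'' plus ``transfinite induction'' is not an argument. The paper avoids the structure tower entirely. Its key tool is \cref{lemma:product_RP}: for open factor maps $\pi_i$ with $R_{\pi_i}\subseteq\RP_{T_i,T_i}(X)$, one has $R_{\pi_1}\times\cdots\times R_{\pi_d}\subseteq\RP_{T_1\times\cdots\times T_d}(X^d)$. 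Combined with the $O$-diagram this yields \cref{1sub}: transitivity of $(X^d,T_1\times\cdots\times T_d)$ can be tested on the maximal equicontinuous factor in a single step, with no intermediate extensions to climb. The rest is an induction on $d$: \cref{trfg} converts $(i)$ and $(ii)$ into transitivity of $R_2\times\cdots\times R_d$ with $R_j=T_jT_1^{-1}$, the inductive hypothesis gives joint transitivity of $(R_2^n)_n,\ldots,(R_d^n)_n$, and the covering/shrinking argument of \cref{lemma:transive_shrinking} transfers this back to $T_1,\ldots,T_d$. If you wish to pursue your structure-theoretic route, you would need at minimum a relative version of condition $(i)$ valid at every level of the tower; nothing in your sketch produces one.
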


\begin{remark} As in the measurable case with Theorem~\ref{T:BB}, Theorem~\ref{mainbb} provides a characterization for all linear iterates. 

Indeed, assuming that the iterate of the $T_i$ is $a_i n+b_i,$ $a_i\in \mathbb{Z}\setminus\{0\}, b_i\in \mathbb{Z},$ $1\leq i\leq d,$ noting that the shifts by the $b_i$'s do not affect the denseness of the orbits, we can use the Theorem~\ref{mainbb} for the functions $T^{a_i}$ (which still belong to $\langle S_1,\ldots,S_k\rangle$). 
\end{remark}

It is important to note that the problem in the topological setting differs significantly from the one in the measurable setting as the dense $G_{\delta}$ subset of $X$ might have zero measure. For example, a minimal weakly mixing system $(X,T)$ may exhibit a discrete spectrum with respect to some invariant measure. In such a system, $(T^n)_n,\ldots(T^{dn})_n$ are jointly transitive but not jointly ergodic. In fact, any ergodic measure preserving system is measurably isomorphic to a minimal and uniquely ergodic (strongly) mixing system (see \cite{Lehrer_mixing_unique_erg_models:1987}). 

We also want to emphasize that \cref{mainbb} fails without the minimality assumption. Indeed, in \cite{Moothathu_diagonal_points:2010}, Moothathu showed that there exists a non-minimal, strongly mixing shift $(X,\sigma)$ such that, for every point $x\in X$, the set $\{(\sigma^n x, \sigma^{2n}x):n\in \Z\}$ fails to be dense in $X^2$. 
Because $(X,\sigma)$ is strongly mixing, the $\Z^2$-system $(X,\sigma,\sigma^2)$ satisfies conditions $(i)$ and $(ii)$ of \cref{mainbb}, but the sequences $(\sigma^n)_n,(\sigma^{2n})_n$ are not jointly transitive. It should be noted that there are no commuting transformations $S_1,\ldots,S_k$ that generate a minimal action and such that $\sigma\in \langle S_1,\ldots,S_k\rangle$. One reason for this is, of course, \cref{mainbb}, but this can also be seen directly in Moothathus's example, using the fact that the set of $\sigma$-periodic points of a given period have to be preserved by $\langle S_1,\ldots,S_k\rangle$, which prevents minimality.

\subsection{Structure of the paper}
 In Section~\ref{S:2} we recall some notions from the theory of dynamical systems. In particular, we provide equivalent statements to joint transitivity (Lemma \ref{le:eqtr}), and we list properties of dynamical cubes and regional proximal relations.

In Section~\ref{S:3} we first characterize (in Theorem~\ref{lemma:product_RP}) regional proximal relations for product transformations and, finally, we prove Theorem~\ref{mainbb} by an inductive argument.

\section{Background material and useful facts}\label{S:2}

\noindent\textbf{Definitions and conventions.}
For any $\mathbb{Z}^{k}$-system $(X,S_{1},\dots,S_{k})$ and $m=(m_{1},\dots,m_{k})\in\mathbb{Z}^{k}$, we write $S_{m}\coloneqq S_{1}^{m_{1}}\cdot\ldots\cdot S_{k}^{m_{k}}$. So, we may write a $\mathbb{Z}^{k}$-system as $(X,(S_{m})_{m\in\mathbb{Z}^{k}})$ whenever we do not need to stress the generators.
With this convention, a $\mathbb{Z}^k$-system is minimal if for any point $x\in X$, its orbit $\{ S_{m} x: m\in \mathbb{Z}^k\}$ is dense in $X$.
We adopt a similar notation for subgroups of $\mathbb{Z}^k$. If $G\subseteq \mathbb{Z}^k$ is a subgroup, then $(X,(S_{m})_{m\in G})$ denotes the system given by the subaction of $G$.

We will use $\rho$ to denote the metric on $X$, and slightly abusing notation, we will use $\rho$ to denote the metric on the product space $X^d$ as well, where $\rho((x_1,\ldots,x_d),(x_1',\ldots,x_d'))=\sup_{1\leq i\leq d} \rho(x_i,x_i')$.

Let $(X,S_{1},\dots,S_{k})$ and $(Y,R_{1},\dots,R_{k})$ be two $\Z^{k}$-systems. We say that $Y$ is a \emph{factor} of $X$ (or that $X$ is an \emph{extension} of $Y$) if there exists a continuous and onto map $\pi\colon X\to Y$ (called the \emph{factor map} from $X$ to $Y$) such that $\pi\circ S_{i}=R_{i}\circ \pi$ for all $1\leq i\leq k$. When $\pi$ is a homeomorphism, we say that the systems are {\em topologically conjugate}.

There is a one-to-one correspondence between the factors and the closed invariant equivalence relations on $X$. Indeed, we can associate a factor map $\pi\colon X\to Y$ with the  relation $R_{\pi}=\{(x,y): \pi(x)=\pi(y)\}$, and, conversely, given a closed invariant equivalence relation $R$, we can associate it with a factor map $X\to X/R$ being the quotient map. 
A factor map $\pi\colon X\to Y$ is {\em almost one-to-one} if there exists a $G_{\delta}$-dense subset $\Omega$ of $X$ such that for any $x\in \Omega$, $\pi^{-1}(\pi(x))=\{x\}$. A factor map $\pi\colon X\to Y$ is {\em open} if $\pi(A)\subseteq Y$ is open whenever $A\subseteq X$ is open. Note that the latter implies that for any $x,x'\in R_{\pi}$, and $\epsilon>0$, there exists $\delta>0$ such that if $\rho(x,y)<\delta$, then there exists $y'\in X$ with $\rho(x',y')<\epsilon$ and $(x',y')\in R_{\pi}$.   

A system $(X,S_1,\ldots,S_k)$ is {\em equicontinuous} if the family of functions generated by $S_1,\ldots,S_k$ is equicontinuous. Any minimal equicontinuous $\Z^k$-system is topologically conjugate to a rotation on a compact abelian group (see \cite[Chapter 2]{Auslander_minimal_flows_and_extensions:1988}).
The {\em maximal equicontinuous factor} of a $\Z^k$-system $(X,S_1,\ldots,S_k)$, is the largest equicontinuous factor of it. That is, any equicontinuous factor of $(X,S_1,\ldots,S_k)$ is also a factor of the maximal one.  The maximal equicontinuous factor of a minimal $\Z^k$-system is induced by the regionally proximal relation $\RP_{\Z^k,\Z^k}(X)$ (see \cref{subsec:DC_RP}).

Let $(X,S_1,\ldots,S_k)$ be a $\Z^k$-system. A pair $(x,y)$ is {\em proximal} if there exists a sequence $(n_i)_i$ in $\Z^k$ such that $\rho(S_{n_i}x,S_{n_i}y)\to 0$ as $i$ goes to infinity. The set of all proximal pairs is denoted by $P(X)$. It is well known that $P(X)\subseteq \RP_{\Z^k,\Z^k}(X)$ (see, for instance, \cite{Shao_Ye_regionally_prox_orderd:2012}). A factor map $\pi\colon X\to Y$ is {\em proximal} if $R_{\pi}\subseteq P(X)$. Any almost one-to-one factor map between minimal systems is proximal (see \cite[Chapter 11]{Auslander_minimal_flows_and_extensions:1988}).

\subsection{Equivalent definitions for joint transitivity}
The following lemma provides a couple of equivalent definitions for joint transitivity. We will use this lemma implicitly throughout this paper. Its proof is a direct generalization of \cite[Lemma 2.8]{Qiu_poly_orbits_tot_minimal:2023} (see also \cite[Lemma 2.4]{Huang_Shao_Ye_top_correspondence_multiple_averages:2019} and \cite{Moothathu_diagonal_points:2010}).

\begin{lemma}\label{le:eqtr}
Let $(X,S_{1},\dots,S_{k})$ be a minimal $\Z^k$-system and $(a_{1}(n))_n,\ldots,(a_{d}(n))_n$ be sequences with values in $\mathbb{Z}^k$. The following are equivalent: 
    \begin{enumerate}[label=(\roman*)]
        \item[$(i)$] There exists a dense $G_{\delta}$ subset $\Omega$ of $X$ such that the set
        $$\{(S_{a_{1}(n)}x,\dots,S_{a_{d}(n)}x)\colon n\in\mathbb{Z}\}$$
        is dense in $X^{d}$ for every $x\in\Omega$.
         \item[$(ii)$] There exists some $x\in X$ such that the set
        $$\{(S_{a_{1}(n)}x,\dots,S_{a_{d}(n)}x)\colon n\in\mathbb{Z}\}$$
        is dense in $X^{d}$.
        \item[$(iii)$] For every non-empty open subsets $U,V_{1},\dots,V_{d}$ of $X$, there is some $n\in\mathbb{Z}$ such that 
        $$U \cap S_{-a_{1}(n)}V_{1}\cap \dots\cap S_{-a_{d}(n)}V_{d}\neq\emptyset.$$
    \end{enumerate}
\end{lemma}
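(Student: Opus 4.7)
The plan is to prove the cycle $(i)\Rightarrow(ii)\Rightarrow(iii)\Rightarrow(i)$. The implication $(i)\Rightarrow(ii)$ is immediate because a dense $G_\delta$ subset of a compact metric space is non-empty, so any point of $\Omega$ witnesses $(ii)$.

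For $(ii)\Rightarrow(iii)$, the main idea is to upgrade a single transitive point to an invariant set of transitive points via minimality. I would introduce
\[
\Omega := \{ y\in X : \{(S_{a_1(n)}y,\ldots,S_{a_d(n)}y):n\in\mathbb{Z}\} \text{ is dense in } X^d\},
\]
and check that $\Omega$ is $\langle S_1,\ldots,S_k\rangle$-invariant. The verification relies on two facts: that $S_m$ commutes with $S_{a_i(n)}$ since $\langle S_1,\ldots,S_k\rangle$ is abelian, and that $S_m\times\cdots\times S_m$ is a homeomorphism of $X^d$, so it sends dense orbits to dense orbits. Given $x\in\Omega$ provided by $(ii)$, minimality then implies $\{S_m x : m\in\mathbb{Z}^k\}$ is dense in $X$; hence for any non-empty open $U$ one finds $m$ with $y:=S_m x\in U\cap\Omega$, and using that $y\in\Omega$ we pick $n$ so that $S_{a_i(n)}y\in V_i$ for all $i$, producing the non-empty intersection $U\cap\bigcap_{i=1}^d S_{-a_i(n)}V_i$.

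For $(iii)\Rightarrow(i)$, I would run a standard Baire category argument. Fix a countable basis $\{B_j\}_{j\in\mathbb{N}}$ of the topology on $X$ and, for each tuple $(j_1,\ldots,j_d)\in\mathbb{N}^d$, set
\[
W_{j_1,\ldots,j_d} := \bigcup_{n\in\mathbb{Z}}\;\bigcap_{i=1}^d S_{-a_i(n)}B_{j_i},
\]
which is open. Condition $(iii)$ asserts precisely that each $W_{j_1,\ldots,j_d}$ intersects every non-empty open subset of $X$, hence is dense. By the Baire category theorem, the countable intersection $\Omega=\bigcap_{(j_1,\ldots,j_d)}W_{j_1,\ldots,j_d}$ is a dense $G_\delta$ set, and unfolding the definition shows that every point of $\Omega$ has the density property required in $(i)$.

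I do not expect genuine obstacles here; the only point that requires a little care is the invariance step in $(ii)\Rightarrow(iii)$, which is where the commutativity of the generators and the fact that the sequences $(a_i(n))_n$ take values in the \emph{abelian} group $\mathbb{Z}^k$ enter. Everything else reduces to formal manipulations and a direct application of Baire category.
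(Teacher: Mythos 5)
Your proposal is correct and follows essentially the same route as the paper: the $(iii)\Rightarrow(i)$ Baire category argument is identical, and your $(ii)\Rightarrow(iii)$ step (invariance of the set of jointly transitive points under $S_m$, via commutativity and the fact that $S_m\times\cdots\times S_m$ is a homeomorphism) is just a mild repackaging of the paper's observation that the shifted orbit set $\{(S_{a_1(n)+m}x,\dots,S_{a_d(n)+m}x):n\in\mathbb{Z}\}$ remains dense. No gaps.
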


\begin{proof}
 The implication $(i)$ $\Rightarrow$ $(ii)$ is obvious. We next prove that $(ii)$ implies $(iii)$. To this end, let $x\in X$ be such that the set
        $$\{(S_{a_{1}(n)}x,\dots,S_{a_{d}(n)}x)\colon n\in\mathbb{Z}\}$$
        is dense in $X^{d}$. Then, for any $m\in\Z^{k}$, the set 
         $$X(x,m):=\{(S_{a_{1}(n)+m}x,\dots,S_{a_{d}(n)+m}x)\colon n\in\mathbb{Z}\}$$
        is also dense in $X^{d}$.  Now, for any non-empty open subsets $U,V_{1},\dots,V_{d}$ of $X$, by the minimality of $(X,S_{1},\dots,S_{k})$, we may find some $m\in\Z^{k}$ such that $S_{m}x\in U,$ and since $X(x,m)$
        is dense in $X^{d}$, there exists $n\in\Z$ such that $S_{a_{i}(n)+m}x\in V_{i}$ for all $1\leq i\leq d$. Therefore,
        the set
        \[U \cap S_{-a_{1}(n)}V_{1}\cap \dots\cap S_{-a_{d}(n)}V_{d}\]
        contains the point $T_{m}x,$ hence it is non-empty.

        It remains to show that $(iii)$ implies $(i)$. Let $\mathcal{F}$ be a countable basis of the topology of $X$ and define
$$\Omega\coloneqq \bigcap_{V_{1},\dots,V_{d}\in\mathcal{F}}\bigcup_{n\in\Z}S_{-a_{1}(n)}V_{1}\cap \dots\cap S_{-a_{d}(n)}V_{d}.$$
It follows from $(iii)$ that $\Omega$ is a dense $G_{\delta}$ set. Moreover, the set
\[\{(S_{a_{1}(n)}x,\dots,S_{a_{d}(n)}x)\colon n\in\mathbb{Z}\}\] is dense in $X^{d}$ for every $x\in\Omega$. 
\end{proof}

\subsection{Dynamical cubes and regionally proximal relations} \label{subsec:DC_RP}
Let $(X,S_1,\ldots,S_k)$ be a system and $(G_1,G_2)$ be a pair of subgroups of $\Z^k$.  Define the {\em space of $(G_1,G_2)$-cubes} as $$\bQ_{G_1,G_2}(X)\coloneqq \overline{\{(x,S_{g_1}x,S_{g_2}x,S_{g_1+g_2}x):  x\in X, g_1 \in G_1, g_2\in G_2\}}\subseteq X^{4}.$$
(Remark that such definitions were initially introduced in \cite{Donoso_Sun_cubes_product_ext:2015}, for the case where each $G_i$ is generated by a single transformation.)
Given $x\in X$, and $(G_1,G_2)$, let 
\[ \overline{\mathcal{F}_{G_1,G_2}}(x)\coloneqq \overline{\{(S_{g_1}x,S_{g_2}x,S_{g_1+g_2}x): g_1\in G_1, g_2\in G_2\}}\subseteq X^3.\]

When $G_i$ is generated by a single element $g_i$ we write $\bQ_{G_i,G_i}(X)$ simply as $\bQ_{S_{g_i},S_{g_i}}(X)$; a similar notation is used for  $\overline{F_{G_i,G_i}}(x)$. Given a single subgroup $G$ of $\Z^k$,  we write $$\RP_{G}(X)\coloneqq\overline{\{(x,S_{g}x):  x\in X, g \in G\}}\subseteq X^{2}$$  (this relation is coined the prolongation relation in \cite{Auslander_Guerin_regio_prox_and_prolongation:1997}). Note that if $(X,(S_m)_{m\in G})$ is transitive (meaning that there exists a point $x$ such that $\{S_mx : m \in G \}$ is dense in $X$), then $\RP_{G}(X)=X\times X$.
Similarly to \cite{Donoso_Sun_cubes_product_ext:2015} (or \cite{Host_Kra_Maass_nilstructure:2010} for the case of a $\Z$-action), we define the relation $\RP_{G_1,G_2}(X)$ as the set of points $(x,y)\in X^2$ such that $(x,y,y,y)\in \bQ_{G_1,G_2}(X)$. It should be noted that if $(X,G)$ is minimal, then $\RP_{G,G}(X)$ is nothing more than the classical regionally proximal relation (see \cite[Chapter 9]{Auslander_minimal_flows_and_extensions:1988} for more information on this relation). 

We need the following lemma.

\begin{lemma}\label{basicq}
     Let $(X,S_{1},\dots,S_{k})$ be a $\Z^k$-system and $G_{1},G_2$ be subgroups of $\Z^k$.
\begin{enumerate}[label=(\roman*)]
 \item[$(i)$] Let $\sigma\colon X^4\to X^4$ be the map with $\sigma(a,b,c,d)\coloneqq(a,c,b,d)$. Then $\sigma(\bQ_{G_1,G_2}(X))=\bQ_{G_2,G_1}(X)$.
 \item[$(ii)$] Consider the system $(\RP_{G_1}(X),G_2^{(2)})$, where $G_2^{(2)}$ is the action given by $g(x,y)=(gx,gy)$, for all $g\in G_2$ and $(x,y)\in X^2.$\footnote{Note that $\RP_{G_1}(X)$ is invariant under this action since $G_1$ and $G_2$ commute.} Then $\RP_{G_2^{(2)}}(\RP_{G_1}(X))=\bQ_{G_1,G_2}(X)$.
 \item[$(iii)$] If $H$ is a subgroup of $\Z^k$, and $\RP_{G_1}(X)=\RP_{G_2}(X)$, then $\bQ_{G_1,H}(X)=\bQ_{G_2,H}(X)$. 
\item[$(iv)$] If $G_1',G_2'$ and $G_1,G_2$ are subgroups of $\Z^k$ such that $G_1'\subseteq G_1$, and $G_2'\subseteq G_2$, then $\bQ_{G_1',G_2'}(X)$ $\subseteq \bQ_{G_1,G_2}(X).$
     \end{enumerate}
 \end{lemma}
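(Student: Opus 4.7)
All four parts reduce to manipulating the defining dense generators of the cube sets, so the plan is mostly bookkeeping with closures; the only part requiring genuine care is (ii).

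For (i), the map $\sigma$ is a homeomorphism of $X^4$, so $\sigma$ commutes with closure. It then suffices to check the equality on the dense generators: a generator $(x, S_{g_1}x, S_{g_2}x, S_{g_1+g_2}x)$ of $\bQ_{G_1,G_2}(X)$ maps to $(x, S_{g_2}x, S_{g_1}x, S_{g_2+g_1}x)$, which is a generator of $\bQ_{G_2,G_1}(X)$ since $G_1,G_2 \subseteq \Z^k$ commute and $g_1+g_2 = g_2+g_1$. The reverse inclusion is symmetric.

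Part (ii) is the main step. The plan is to unfold both sides as closures of explicit generator sets and prove each closure equals a common third one. Write
\[
C \coloneqq \{(x, S_{g_1}x, S_{g_2}x, S_{g_1+g_2}x) : x\in X,\, g_1\in G_1,\, g_2 \in G_2\},
\]
so that $\bQ_{G_1,G_2}(X) = \overline{C}$, and
\[
B \coloneqq \{(a,b,S_{g_2}a, S_{g_2}b) : (a,b)\in \RP_{G_1}(X),\, g_2\in G_2\},
\]
so that $\RP_{G_2^{(2)}}(\RP_{G_1}(X)) = \overline{B}$. The inclusion $C\subseteq B$ is immediate by taking $(a,b)=(x,S_{g_1}x)\in \RP_{G_1}(X)$. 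For the other direction, given $(a,b)\in \RP_{G_1}(X)$, pick a sequence $(x_n,S_{g_{1,n}}x_n)\to(a,b)$; by continuity of the fixed homeomorphism $S_{g_2}$, the corresponding generators of $C$ converge to the chosen element of $B$, so $B\subseteq \overline{C}$, and hence $\overline{B}\subseteq \overline{C}$. Combined with $C\subseteq B$ this yields $\overline{B}=\overline{C}$.

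Part (iii) is then immediate from (ii): both $\bQ_{G_1,H}(X)$ and $\bQ_{G_2,H}(X)$ equal $\RP_{H^{(2)}}(\RP_{G_i}(X))$ for $i=1,2$ respectively, and these are equal by hypothesis. Part (iv) requires no argument beyond the observation that if $G_1'\subseteq G_1$ and $G_2'\subseteq G_2$, then the generator set of $\bQ_{G_1',G_2'}(X)$ is contained in that of $\bQ_{G_1,G_2}(X)$, so their closures satisfy the same inclusion. The only subtlety in the whole lemma is ensuring in (ii) that taking closures in $X^2$ and then in $X^4$ produces the same object as the single closure in $X^4$; the continuity argument above handles this.
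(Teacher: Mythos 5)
Your proposal is correct and follows essentially the same route as the paper: parts (i) and (iv) by inspecting generators, part (ii) by showing the generating set of each cube set is contained in the closure of the other (your sequence argument for $B\subseteq\overline{C}$ is the paper's $\epsilon$--$\delta$ argument using continuity of the fixed map $S_{g_2}$), and part (iii) as an immediate consequence of (ii).
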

\begin{proof}
 $(i)$ and $(iv)$ follow directly from the definitions.
 
To show $(ii)$, first note that for all $x \in X$, $g_1 \in G_1$, and $g_2\in G_2$, the point $(x,S_{g_1}x,S_{g_2}x,S_{g_1+g_2}x)$ belongs to $\RP_{G_2^{(2)}}(\RP_{G_1}(X))$ (here we naturally identify this point with $((x,S{g_1}x),(S_{g_2}x,S_{g_1+g_2}x))$). Therefore, $\bQ_{G_1,G_2}(X)\subseteq \RP_{G_2^{(2)}}(\RP_{G_1}(X))$. For the converse inclusion, it suffices to show that for any $(x,y)\in \RP_{G_1}(X)$ and any $g_2\in G_2$, we have $(x,y,S_{g_2}x,S_{g_2}y)\in \bQ_{G_1,G_2}(X)$. Let $\epsilon>0$ and choose $0<\delta<\epsilon$ so that if $z,z'\in X$ and $\rho(z,z')<\delta$, then $\rho(S_{g_2}z,S_{g_2}z')<\epsilon$. We can find $x'\in X$ and $g_1\in G_1$ such that $\rho((x',S_{g_1}x'),(x,y))<\delta$. It follows that $( x',S_{g_1}x',S_{g_2}x',S_{g_1+g_2}x')$ is at distance at most $\epsilon$ of $(x,y,S_{g_2}x,S_{g_2}y)$. Since $\epsilon>0$ is arbitrary, we conclude that $(x,y,S_{g_2}x,S_{g_2}y)\in \bQ_{G_1,G_2}(X)$, as desired.

 $(iii)$ follows immediately from $(ii)$. 
\end{proof}
 
\begin{corollary}  \label{cor:transitive_RP}
   Let $(X,S_1,\ldots,S_k)$ be a $\Z^k$-system and let $G\subseteq \Z^k$ be a subgroup such that $(X,(S_m)_{m\in G})$ is transitive. Then, for any subgroup $H$ of $\Z^k$, we have $\bQ_{G,H}(X)=\bQ_{\Z^k,H}(X)$. In particular, $\RP_{G,G}(X)=\RP_{\Z^k,\Z^k}(X)$.  
\end{corollary}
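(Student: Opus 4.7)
The plan is to reduce the statement to a direct application of \cref{basicq}$(iii)$. Recall that the paper already notes that whenever $(X,(S_m)_{m\in G})$ is transitive one has $\RP_{G}(X)=X\times X$. Since we trivially have the chain of inclusions $\RP_{G}(X)\subseteq \RP_{\Z^k}(X)\subseteq X\times X$, transitivity of the $G$-subaction forces $\RP_{G}(X)=\RP_{\Z^k}(X)=X\times X$.

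Once the two prolongation relations coincide, \cref{basicq}$(iii)$ applied with $G_1=G$ and $G_2=\Z^k$ yields, for any subgroup $H$ of $\Z^k$, the equality $\bQ_{G,H}(X)=\bQ_{\Z^k,H}(X)$, which is the first claim.

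For the ``in particular'' part, I would combine the first claim with the coordinate swap in \cref{basicq}$(i)$. Specializing the identity to $H=\Z^k$ gives $\bQ_{G,\Z^k}(X)=\bQ_{\Z^k,\Z^k}(X)$; applying $\sigma$ and using \cref{basicq}$(i)$ then gives $\bQ_{\Z^k,G}(X)=\bQ_{\Z^k,\Z^k}(X)$. Specializing again to $H=G$ yields $\bQ_{G,G}(X)=\bQ_{\Z^k,G}(X)=\bQ_{\Z^k,\Z^k}(X)$. Since $\RP_{G_1,G_2}(X)$ is by definition the fiber of $\bQ_{G_1,G_2}(X)$ over points of the form $(\cdot,y,y,y)$, the equality of cube spaces forces $\RP_{G,G}(X)=\RP_{\Z^k,\Z^k}(X)$, as desired.

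There is no real obstacle: the only nontrivial input is the fact that transitivity of a subaction collapses its prolongation relation to the full product, and the rest is bookkeeping with \cref{basicq}.
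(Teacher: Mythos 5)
Your proof is correct and follows essentially the same route as the paper's: both reduce the first claim to Lemma~\ref{basicq}$(iii)$ via the observation that transitivity of the $G$-subaction forces $\RP_{G}(X)=X\times X=\RP_{\Z^k}(X)$, and both derive the ``in particular'' part by combining this with the swap map of Lemma~\ref{basicq}$(i)$. Your write-up merely makes explicit the bookkeeping that the paper compresses into ``Using $(i)$ and $(iii)$ of Lemma~\ref{basicq}.''
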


\begin{proof}
Note that by \cref{basicq} $(iv)$, the inclusion $\bQ_{G,H}(X)\subseteq \bQ_{\Z^k,\Z^k}(X)$ always holds. In addition, if $G$ is transitive, since $\RP_{G}(X)=X\times X=\RP_{\Z^k}(X)$, \cref{basicq} $(iii)$ implies that $\bQ_{G,H}(X)=\bQ_{\Z^k,H}(X)$. Using $(i)$ and $(iii)$ of \cref{basicq}, we get $\bQ_{G,G}(X)=\bQ_{\Z^k,\Z^k}(X),$ from where $\RP_{G,G}(X)=\RP_{\Z^k,\Z^k}(X)$.
\end{proof}

We remark that Corollary \ref{cor:transitive_RP} could also be deduced by using a proof similar to that of \cite[Lemma~6.13]{Donoso_Sun_cubes_product_ext:2015}.

The following theorem is classical (see, for instance, \cite{Auslander_minimal_flows_and_extensions:1988}). 

\begin{theorem} \label{thm:factor_rotation}
  Let $(X,S_1,\ldots,S_k)$ be a minimal system. Then $\RP_{\Z^k,\Z^k}(X)$ is an equivalence relation, the system $X/\RP_{\Z^k,\Z^k}$ is the maximal equicontinuous factor of $X$, and this factor is topologically conjugate to a rotation on a compact abelian group.  
\end{theorem}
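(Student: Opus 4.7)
The plan is to establish this classical result in three stages, using the dynamical cube machinery already developed in \cref{basicq} together with standard equicontinuous structure theory.

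First, I would verify that $\RP_{\Z^k,\Z^k}(X)$ is a closed, reflexive, symmetric, and $\Z^k$-invariant relation. Closedness and reflexivity (take $g_1=g_2=0$) follow directly from the definition. Invariance under the diagonal action of $\Z^k$ on $X^4$ is immediate, so for any $m\in\Z^k$, $(x,y)\in\RP_{\Z^k,\Z^k}(X)$ yields $(S_m x,S_m y)\in\RP_{\Z^k,\Z^k}(X)$. Symmetry is the first place where the cube structure is essential: by \cref{basicq}(i), the swap map $\sigma$ sends $\bQ_{\Z^k,\Z^k}(X)$ to itself, and combining this with \cref{basicq}(ii) allows one to pass from $(x,y,y,y)\in\bQ_{\Z^k,\Z^k}(X)$ to $(y,x,x,x)\in\bQ_{\Z^k,\Z^k}(X)$ via the prolongation relation $\RP_{\Z^k}(X)$, which is trivially symmetric.

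Second, I would identify $\RP_{\Z^k,\Z^k}(X)$ with the defining relation of the maximal equicontinuous factor. The plan has two halves. For the upper bound, if $\phi\colon X\to Y$ is an equicontinuous factor and $(x,y)\in\RP_{\Z^k,\Z^k}(X)$, then approximations of the form $(x',S_{g_1}x',S_{g_2}x',S_{g_1+g_2}x')\approx (x,y,y,y)$ force $\phi(x)$ and $\phi(y)$ to be within arbitrary $\epsilon$ after applying the uniform equicontinuity estimate on the orbit of $x'$, so $\phi(x)=\phi(y)$. For the lower bound, one shows that the quotient of $X$ by the smallest closed invariant equivalence relation containing $\RP_{\Z^k,\Z^k}(X)$ is equicontinuous; this is achieved through the Ellis semigroup machinery, using the fact that for minimal systems the enveloping group of the quotient acts by homeomorphisms, and the cube-defining relations encode exactly the obstruction to equicontinuity. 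Minimality of $(X,S_1,\dots,S_k)$ descends to the quotient, so the quotient is a minimal equicontinuous $\Z^k$-system, and by the classical structure theorem (see \cite[Chapter 2]{Auslander_minimal_flows_and_extensions:1988}), it is topologically conjugate to a rotation on a compact abelian group.

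Finally, combining the two halves forces $\RP_{\Z^k,\Z^k}(X)$ to equal the smallest closed invariant equivalence relation with equicontinuous quotient; in particular, $\RP_{\Z^k,\Z^k}(X)$ is already an equivalence relation, and the quotient is the maximal equicontinuous factor.

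The main obstacle is transitivity of $\RP_{\Z^k,\Z^k}(X)$, equivalently the statement that the quotient by the relation (without a priori closing under transitivity) is equicontinuous. For $\Z$-actions this is the content of Auslander's classical theorem; the generalization to commuting $\Z^k$-actions requires checking that each generator (and hence every group element) acts equicontinuously on the quotient, which is delicate but follows by the same Ellis semigroup argument once one observes that the cube relations are preserved under the full $\Z^k$-action. All remaining assertions then reduce to the standard equivalence between minimal equicontinuous systems and compact abelian group rotations.
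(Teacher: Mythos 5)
The paper does not actually prove \cref{thm:factor_rotation}: it records it as a classical fact and cites \cite{Auslander_minimal_flows_and_extensions:1988}, so there is no internal argument to compare yours against. Judged on its own, your outline reproduces the standard skeleton (closedness, reflexivity, invariance; equicontinuous factors collapse regionally proximal pairs; the quotient is a minimal equicontinuous system, hence a group rotation), but it has a genuine gap at exactly the point where the theorem has content. The assertion that $\RP_{\Z^k,\Z^k}(X)$ is \emph{transitive} --- equivalently, that the quotient by the relation itself, not by its transitive closure, is already equicontinuous --- is the Ellis--Keynes/Veech theorem; for minimal abelian actions it is usually proved using the existence of an invariant measure (amenability of $\Z^k$) or a McMahon-type argument, and it can fail for minimal actions of general groups. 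Writing that this ``follows by the same Ellis semigroup argument'' and is ``delicate'' defers the entire mathematical substance of the statement; a proof would have to either carry out that argument or cite it precisely, which is in effect what the paper does.

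Two further points need attention. First, your symmetry argument does not work as written: the swap map $\sigma$ of \cref{basicq}$(i)$ exchanges the second and third coordinates and therefore fixes the point $(x,y,y,y)$, so it cannot produce $(y,x,x,x)\in\bQ_{\Z^k,\Z^k}(X)$; likewise \cref{basicq}$(ii)$ only yields $(y,y,x,y)\in\bQ_{\Z^k,\Z^k}(X)$ from symmetry of the prolongation relation on the product. Symmetry of $\RP_{\Z^k,\Z^k}(X)$ is immediate for the \emph{classical} regionally proximal relation but not a formal consequence of the cube definition. This points to the second omission: your argument silently switches between the cube-based relation $\{(x,y): (x,y,y,y)\in\bQ_{\Z^k,\Z^k}(X)\}$ used in the paper and the classical regionally proximal relation to which the Auslander/Ellis theory applies. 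The identification of the two for minimal systems (asserted in Section~\ref{subsec:DC_RP} of the paper) is itself a step that must be established before any of the classical machinery can be invoked.
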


\subsection{The $O$-diagram} 
The following is a classical theorem in the structural theory of topological dynamical systems and will be very useful for our purposes. We state a version for $\Z^k$, giving only the information we need for our work. We note that this theorem is valid for general group actions. For further details, the interested reader may consult \cite[Chapter VI, Section 3]{deVries_elements_topological_dynamics:1993} or \cite[Chapter 14]{Auslander_minimal_flows_and_extensions:1988}. 

\begin{theorem}
\label{thm:RIC} \label{thm:O-diagram}
Let $(X,S_1,\ldots,S_k)$ and $(Y, R_1,\ldots,R_k)$ be two $\Z^k$-minimal systems and $\pi\colon X \to Y$ a factor map. Then there exist two $\Z^k$-minimal systems $(\tilde{X},\tilde{S}_1,\ldots,\tilde{S}_k)$ and $(\tilde{Y}, \tilde{R}_1,\ldots,\tilde{R}_k)$ and factor maps $\tilde{\pi}\colon \tilde{X}\to X$, $\tilde{\sigma}\colon \tilde{X}\to X$, $\tilde{\tau}\colon \tilde{Y}\to Y$ 
such that the following diagram  (which is called the $O$-diagram)
\[
\begin{tikzcd}
	\tilde{X}\arrow[r, "\tilde{\sigma}"] \arrow[d,"\tilde{\pi}" ]
	&X\arrow[d, "\pi " ] \\
	\tilde{Y} \arrow[r,  "\tilde{\tau}" ]
	& Y \end{tikzcd}
\]
 is commutative, $\tilde{\sigma}$ and $\tilde{\tau}$ are almost one-to-one, and $\tilde{\pi}$ is open.
\end{theorem}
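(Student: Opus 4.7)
The plan is to build $\tilde Y$ as an almost one-to-one extension of $Y$ on which the set-valued fiber map $y \mapsto \pi^{-1}(y)$ becomes continuous, and then to realize $\tilde X$ as the associated ``bundle'' over $\tilde Y$. The workspace will be the hyperspace $2^X$ of non-empty closed subsets of $X$ with the Hausdorff metric, carrying the continuous $\Z^k$-action $m \cdot A \coloneqq S_m A$. The map $\phi\colon Y \to 2^X$ given by $\phi(y) \coloneqq \pi^{-1}(y)$ is equivariant and upper semi-continuous, so by a standard Baire category argument the set $C \subseteq Y$ of its continuity points is a dense $G_\delta$.

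I would construct $\tilde Y$ as a minimal subsystem of the closed invariant subset $\overline{\{(y,\phi(y)) : y \in Y\}} \subseteq Y \times 2^X$ (under the diagonal action), writing $\tilde\tau$ and $\psi$ for the two coordinate projections. Minimality of $Y$ forces the image of $\tilde\tau$ to equal $Y$, and for every $y \in C$ any $(y,A) \in \tilde Y$ must satisfy $A = \phi(y)$ by continuity of $\phi$ at $y$; hence $\tilde\tau^{-1}(y) = \{(y,\phi(y))\}$ and $\tilde\tau$ is almost one-to-one. The map $\psi$ is continuous and equivariant with $\psi(y^*) \subseteq \pi^{-1}(\tilde\tau(y^*))$. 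Next I set $W \coloneqq \{(y^*,x) \in \tilde Y \times X : x \in \psi(y^*)\}$, a closed invariant subset of $\tilde Y \times X$, pick a minimal subsystem $\tilde X \subseteq W$, and let $\tilde\pi$ and $\tilde\sigma$ be the coordinate projections to $\tilde Y$ and $X$ respectively. Surjectivity of both follows from minimality of $\tilde Y$ and $X$; the diagram commutes by construction; and $\tilde\sigma$ is almost one-to-one on the dense $G_\delta$ set $\pi^{-1}(C) \subseteq X$, since the $\tilde\tau$-fiber above any $y \in C$ is a singleton.

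The main obstacle I expect is showing that $\tilde\pi$ is open. On the ambient space $W$ the projection to $\tilde Y$ is open ``for free'': continuity of $\psi$ in the Hausdorff metric implies that whenever $y_n^* \to y^*$ in $\tilde Y$, every $x \in \psi(y^*)$ admits lifts $x_n \in \psi(y_n^*)$ with $x_n \to x$. The subtlety is that, after restricting to a minimal subsystem $\tilde X \subseteq W$, these lifts must still lie in $\tilde X$. The resolution---this is the content of the classical RIC-extension construction---is to pick $\tilde X$ as the orbit closure of a suitably chosen almost periodic point in $W$ and to exploit the enveloping semigroup structure, thereby guaranteeing that the refined fiber map $y^* \mapsto \{x \in X : (y^*,x) \in \tilde X\}$ remains Hausdorff-continuous in $y^*$, which is equivalent to openness of $\tilde\pi$. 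A complete implementation for general abelian group actions is carried out in \cite[Chapter~VI]{deVries_elements_topological_dynamics:1993} and \cite[Chapter~14]{Auslander_minimal_flows_and_extensions:1988}, and transfers verbatim to the $\Z^k$-setting used here.
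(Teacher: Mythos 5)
The paper does not actually prove this statement: it quotes it as a classical result and refers the reader to \cite[Chapter VI, Section 3]{deVries_elements_topological_dynamics:1993} and \cite[Chapter 14]{Auslander_minimal_flows_and_extensions:1988}. So the only meaningful comparison is between your sketch and the classical construction in those sources. The first half of your argument is correct and essentially standard: taking $\tilde Y$ to be a minimal subsystem of $\overline{\{(y,\pi^{-1}(y)):y\in Y\}}\subseteq Y\times 2^X$ does make $\tilde\tau$ almost one-to-one over the residual set $C$ of continuity points of the u.s.c.\ fiber map, and the deduction that $\tilde\sigma$ is almost one-to-one over $\pi^{-1}(C)$ works (one should note explicitly that $\pi^{-1}(C)$ is residual in $X$, which uses minimality: if $\pi^{-1}(F)$ contained an open set for $F$ closed nowhere dense, finitely many translates of $F$ would cover $Y$).

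The genuine gap is exactly where you flag it, and your proposed resolution does not close it. Openness of the projection $W\to\tilde Y$ is a property of the full relation $W=\{(y^*,x):x\in\psi(y^*)\}$ and is destroyed, in general, by passing to a closed invariant subset: for an arbitrary minimal $\tilde X\subseteq W$ the refined fiber map $y^*\mapsto\{x:(y^*,x)\in\tilde X\}$ is only upper semicontinuous, and there is no reason it should be lower semicontinuous. Saying one should ``pick $\tilde X$ as the orbit closure of a suitably chosen almost periodic point and exploit the enveloping semigroup'' is not a repair of your construction but a pointer to a different one: in the classical RIC/shadow-diagram argument one fixes a minimal idempotent $u$ and a point $x_0=ux_0$ with $y_0=\pi(x_0)$, builds the quasi-factor $\tilde Y$ from the single set $u\circ\pi^{-1}(y_0)$ via the circle operation, and then \emph{proves} that $\tilde X:=\{(A,x):A\in\tilde Y,\ x\in A\}$ is minimal and that $\tilde\pi$ is RIC (hence open); minimality there is a theorem about that specific $\tilde X$, not a choice made inside $W$. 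As it stands, your write-up establishes the two almost one-to-one maps but, like the paper itself, ultimately defers the openness of $\tilde\pi$ to the cited references; as a self-contained proof it is incomplete at that step.
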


\cref{thm:RIC} says that, modulo almost one-to-one extensions, we may assume that the factor map is open.

\section{The proof of the main result}\label{S:3}
\subsection{A characterization for the regional proximal relation for product transformations}

The following is the main tool we use in the proof of \cref{mainbb} and can be interpreted as a topological analogue of seminorm control estimates in product spaces (see \cite[Lemma 5.2]{Donoso_Koutsogiannis_Sun_seminorms_polynomials_joint_ergodicity:2022} or \cite[Lemma 3.4]{Donoso_Ferre_Koutsogiannis_Sun_multicorr_joint_erg:2024} for analogous statements in the measurable setting).

\begin{theorem} \label{lemma:product_RP}
Let $(X,S_1,\dots,S_k)$ be a $\Z^k$-system and $T_1,\ldots,T_d \in \langle S_1,\ldots, S_k\rangle$. Let $(Y_i,S_1,\ldots,S_k)$, $1\leq i\leq d$ be factors of $X$, such that for all $i$, the factor map $\pi_i\colon X\to Y_i$ is open, and $R_{\pi_i}\subseteq \RP_{T_i,T_i}(X)$. For all $1\leq i\leq d$, let $(x_i,y_i)\in R_{\pi_i}$.
    Then 
    \[ ((x_1,\dots,x_d),(y_1,\dots,y_d))\in \RP_{T_1\times\dots\times T_d}(X^{d}).  \]  
\end{theorem}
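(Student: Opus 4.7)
I would proceed by induction on $d$. In fact, I would prove the slightly stronger statement that
\[ ((x_1,\ldots,x_d),(y_1,\ldots,y_d),(y_1,\ldots,y_d),(y_1,\ldots,y_d)) \in \bQ_{T_1\times\cdots\times T_d,\,T_1\times\cdots\times T_d}(X^d), \]
which, by projecting onto the first two coordinates and using \cref{basicq}(ii) (or just the definition of $\RP_{G,G}$ versus $\RP_G$), implies the desired membership in $\RP_{T_1\times\cdots\times T_d}(X^d)$.

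For the base case $d=1$, the hypothesis $(x_1,y_1)\in R_{\pi_1}\subseteq \RP_{T_1,T_1}(X)$ already gives $(x_1,y_1,y_1,y_1)\in \bQ_{T_1,T_1}(X)$, so by projection $(x_1,y_1)\in \RP_{T_1}(X)$.

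For the inductive step, suppose the claim holds for $d-1$. The inductive hypothesis applied to the first $d-1$ coordinates yields, for each $\varepsilon>0$, integers $a,b\in\Z$ and points $w_1,\ldots,w_{d-1}\in X$ such that, for all $1\leq i\leq d-1$,
\[ \rho(w_i,x_i)<\varepsilon, \quad \rho(T_i^{a}w_i,y_i)<\varepsilon, \quad \rho(T_i^{b}w_i,y_i)<\varepsilon, \quad \rho(T_i^{a+b}w_i,y_i)<\varepsilon. \]
The task is to produce a point $w_d\in X$, close to $x_d$, such that $T_d^{a}w_d, T_d^{b}w_d, T_d^{a+b}w_d$ are all close to $y_d$ simultaneously, \emph{for these very same integers $a,b$}. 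The crucial tool will be the openness of $\pi_d$: whenever a sequence converges in $Y_d$ to $\pi_d(x_d)=\pi_d(y_d)$, we may lift it in $X$ to a sequence converging to any prescribed preimage of that limit point, in particular to $x_d$ or to $y_d$ at will. Starting from the cube-type approximation for the $d$-th coordinate coming from $(x_d,y_d,y_d,y_d)\in \bQ_{T_d,T_d}(X)$, projecting everything to $Y_d$ (where $\pi_d(x_d)=\pi_d(y_d)$, so the target cube in $Y_d$ is the constant one at $\pi_d(x_d)$), and then applying the open-lifting property, I would produce a $w_d$ that matches the integers $a,b$ from the inductive step while realizing the desired proximities in $X$. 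Here \cref{basicq}(ii), which expresses $\bQ_{T_d,T_d}(X)$ as $\RP_{T_d^{(2)}}(\RP_{T_d}(X))$, is what organizes the two-parameter approximation as an iterated prolongation, and is where the openness fits naturally.

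\textbf{Main obstacle.} The hard part is exactly this synchronization: the integers $a,b$ are dictated by the inductive step applied to the first $d-1$ coordinates, and a priori there is no reason that good simultaneous shifts for the $d$-th coordinate can be arranged to match them. The resolution combines (i) openness of $\pi_d$, which allows us to lift limits in $Y_d$ to limits in $X$ converging to any prescribed point in the fiber, and (ii) the fact that, modulo $R_{\pi_d}$, the $d$-th coordinate collapses so that the cube data degenerates in $Y_d$ and can be reconstructed in $X$ by lifting. A diagonal/subsequence argument together with the $R_{\pi_d}\subseteq \RP_{T_d,T_d}(X)$ inclusion is what I would expect to make this precise. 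Once this lifting step is executed, assembling $(w_1,\ldots,w_d)$ gives the required cube in $X^d$, completing the induction.
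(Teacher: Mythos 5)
There is a genuine gap, and it sits exactly at the point you flag as the ``main obstacle.'' Your proposed resolution --- project the $d$-th coordinate to $Y_d$, observe that the cube degenerates there, and lift back using openness of $\pi_d$ --- does not address the synchronization problem. Openness of $\pi_d$ only lets you lift points of $Y_d$ that are already close to $\pi_d(x_d)$ back to points of $X$ close to a prescribed preimage; it gives you no control whatsoever over which integers $a,b$ work. For the integers $a,b$ handed to you by the inductive hypothesis on the first $d-1$ coordinates, the point $T_d^{a}\pi_d(x_d)$ need not be anywhere near $\pi_d(y_d)=\pi_d(x_d)$: if $Y_d$ is a nontrivial rotation, $a$ would have to be a return time of that rotation, and nothing in the inductive step guarantees this. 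So the constant cube in $Y_d$ cannot be ``reconstructed by lifting'' for those particular $a,b$, and the induction on $d$ does not close. (You also aim at the formally stronger statement of membership in $\bQ_{T_1\times\cdots\times T_d,\,T_1\times\cdots\times T_d}(X^d)$, which turns the synchronization into a two-parameter problem and is not needed for the theorem.)

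The paper resolves this differently: it never fixes the shift in advance. It runs an induction on the coordinate index $r$ (``Property $r$''), converting one coordinate at a time from ``near $y_i$'' to ``near $x_i$'' while \emph{updating} the shift $n\mapsto n+n'$. The engine is \cref{lem:recurrence_RP}: given neighborhoods, there is a single $n'$ that simultaneously returns the $d-1$ already-handled neighborhoods approximately to themselves and carries the remaining coordinate across its $R_{\pi_{r+1}}$-fiber. That lemma is where the hypotheses actually get used quantitatively: $R_{\pi_i}\subseteq \RP_{T_i,T_i}(X)$ yields, via \cref{lem:Delta_set}, that the set of good ``transition'' times for coordinate $i$ contains a difference set $\{a_j-a_{j'}: j>j'\}$, while Poincar\'e recurrence for a fully supported invariant measure shows the set of simultaneous ``return'' times for the other coordinates meets every such difference set. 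Openness of $\pi_{r+1}$ enters only to perturb $x_{r+1}$ within its fiber so that the fiber relation can be applied at the approximating point $z_{r+1}$. Without an argument of this largeness/intersection type, the claim that the integers can be matched across coordinates is unsupported, so the proposal as written does not constitute a proof.
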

Interpreting appropriately the coordinates, this can be stated as    ``$\RP_{T_1,T_1}(X)\times\dots\times \RP_{T_d,T_d}(X)\subseteq  \RP_{T_1\times\dots\times T_d}(X^{d})$.''

We need some lemmas before proving \cref{lemma:product_RP}. The following lemma can be deduced from the proof of \cite[Theorem 7.3.2]{Huang_Shao_Ye_nilbohr_automorphy:2016}, or using \cite{Veech_equicontinuous_structure_abelian_groups:1968}.
We give a proof for completeness.

\begin{lemma} \label{lem:Delta_set} Let $(X,T)$ be a $\Z$-system and let  $x,y\in X$ be such that $(x,y,x) \in {\overline{\mathcal{F}_{T,T}}}(x)$. Then for any open neighborhood $U$ of $y$, there is a sequence  $(a_i)_{i\in\N}\subseteq \Z$ of integers infinitely many values such that the set $\{n\in \Z : T^nx\in U\}$ contains $\{a_j-a_i:\;j>i\}$.
\end{lemma}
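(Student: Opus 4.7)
I will prove the lemma by a recursive application of the hypothesis with rapidly shrinking tolerances. The hypothesis $(x,y,x)\in\overline{\mathcal{F}_{T,T}}(x)$ supplies, for every $\epsilon>0$, pairs $(m,n)\in\Z^2$ with $T^m x, T^{m+n}x$ within $\epsilon$ of $x$ and $T^n x$ within $\epsilon$ of $y$. I will pick such pairs $(m_k,n_k)$ one by one, shrinking the tolerance fast enough to compensate for earlier continuity moduli, and define $a_i$ as a partial sum in these integers so that every difference $a_j - a_i$ may be read as a composition of $T$-iterates whose action on $x$ is forced to land close to $y$.

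\textbf{Construction.} Fix $\epsilon_0 > 0$ so that $B(y,\epsilon_0)\subseteq U$. Build $(m_k,n_k)_{k\ge 1}$ inductively. At step $k$ the previous $(m_j,n_j)$ for $j<k$ are already fixed, so the maps $T^{n_j}$ and $T^{m_j+n_j}$ are fixed and (being continuous self-maps of the compact $X$) uniformly continuous; choose $\epsilon_k>0$ small enough that any composition of finitely many of these maps sends every $\epsilon_k$-ball around $x$ into an $\epsilon_0/2^{k+5}$-ball around the corresponding image of $x$ (only finitely many constraints, so this is possible). Then use the hypothesis to find $(m_k,n_k)\in\Z^2$ with
\[
d(T^{m_k}x,x),\; d(T^{n_k}x,y),\; d(T^{m_k+n_k}x,x) < \epsilon_k.
\]
Set $a_1=0$ and, for $i\ge 2$,
\[
a_i := n_1 + m_2 + n_2 + m_3 + \cdots + n_{i-1} + m_i.
\]

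\textbf{Verification.} A direct telescoping gives, for $1\le i<j$,
\[
a_j - a_i = n_i + (m_{i+1}+n_{i+1}) + \cdots + (m_{j-1}+n_{j-1}) + m_j,
\]
so by commutativity of the iterates of $T$,
\[
T^{a_j - a_i}x = T^{n_i}\left( T^{m_{i+1}+n_{i+1}}\left( \cdots T^{m_{j-1}+n_{j-1}}\left( T^{m_j}x \right)\cdots \right)\right).
\]
Reading this composition from the innermost outward, $T^{m_j}x$ lies within $\epsilon_j$ of $x$; each subsequent $T^{m_k+n_k}$ sends a point close to $x$ to a point close to $T^{m_k+n_k}x$, which itself is within $\epsilon_k$ of $x$, so the intermediate point stays close to $x$ with tightly controlled error (by the diagonal choice of the $\epsilon_k$'s). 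The final application of $T^{n_i}$ then sends this to a point close to $T^{n_i}x$, which is within $\epsilon_i$ of $y$; the total error stays below $\epsilon_0$, placing $T^{a_j - a_i}x$ in $B(y,\epsilon_0)\subseteq U$. To make $(a_i)$ take infinitely many values, I use the slack in the hypothesis to demand at each step that $|n_k|$ dominate $\sum_{j<k}(|m_j|+|n_j|)$, which forces the $a_i$ to be eventually strictly monotone.

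\textbf{Main obstacle.} The delicate point is the error bookkeeping: the composition $T^{m_{j-1}+n_{j-1}}\circ\cdots\circ T^{m_{i+1}+n_{i+1}}$ may involve arbitrarily many terms, and each $T^{m_k+n_k}$ has its own, possibly very bad, modulus of continuity. The remedy is precisely the diagonal choice of $\epsilon_k$ \emph{after} the prior $(m_j,n_j)$ have been fixed: at each stage only finitely many moduli must be beaten, so $\epsilon_k$ can be taken as small as needed while still being consistent with the hypothesis.
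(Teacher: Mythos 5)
Your proof is correct and follows essentially the same strategy as the paper's: extract pairs $(m_k,n_k)$ from the hypothesis with tolerances $\epsilon_k$ chosen only after the earlier iterates are fixed (so that only finitely many moduli of continuity must be beaten at each stage), form the $a_i$ as partial sums so that each difference $a_j-a_i$ splits into one ``passage'' exponent plus ``return'' exponents of strictly larger index, and control the error by peeling off the largest index first. The only point to tighten is the final monotonicity claim: since $a_{i+1}-a_i=n_i+m_{i+1}$, requiring $|n_k|$ to dominate the earlier terms does not by itself rule out $m_{i+1}=-n_i$, so you should instead demand that each newly chosen integer (in particular $|m_{i+1}|$) dominate the sum of the absolute values of all previously chosen ones.
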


\begin{proof}
Let $\epsilon>0$ be such that $B(y,\epsilon)\subseteq U$. For $i\in \N$, set $\epsilon_i=\epsilon/2^i$. Construct a sequence $(\delta_i)_{i\in \N}$, with $0<\delta_i<\epsilon_i$ and  a sequence $(m_i,n_i)_{i\in \N}$ in $\Z\times \Z$   as follows:
Let $n_1,m_1$ be such that $\rho(T^{n_1}x,x)<\epsilon_1$, $\rho(T^{m_1}x,y)<\epsilon_1,$ and $\rho(T^{n_1+m_1}x,x)<\epsilon_1$. Pick $0<\delta_2<\epsilon_2$ such that $\rho(z,z')<\delta_2$ implies that $\rho(T^a z,T^a z')<\epsilon_2$ for all $|a|\leq |n_1|+|m_1|$. Take $n_2$, $m_2$ such that $\rho(T^{n_2}x,x)< \delta_2$, $\rho(T^{m_2}x,y)< \delta_2$ and $\rho(T^{n_2+m_2}x,x)<\delta_2$. (We highlight here that the numbers $n_2, m_2$ can be taken to be arbitrarily large.) Note that the definition of $\delta_2$ implies that:
\begin{multline*}
\rho(T^{n_2+n_1}x,x)< \epsilon_1+\epsilon_2, \ \rho(T^{n_2+n_1+m_1}x,x) < \epsilon_1+\epsilon_2, \ \rho(T^{n_2+m_2+n_1}x,x)<\epsilon_1+\epsilon_2,  \\ \rho(T^{n_2+m_2+n_1+m_1}x,x)< \epsilon_1+\epsilon_2, \ \rho(T^{n_2+m_1}x,y) < \epsilon_1+\epsilon_2, \;\; \text{and}\;\; \rho(T^{n_2+m_2+m_1}x,y) < \epsilon_1+\epsilon_2.
\end{multline*}
So, if we set $R_1=\{n_1,n_1+m_1\}$, $P_1=\{m_1\}$, $R_2=\{n_2,n_2+m_2\}$, we have that $\rho(T^{a+b}x,x)< \epsilon_1+\epsilon_2$ for all $a\in R_2$ and $b\in R_1$, and $\rho(T^{a+c}x,y)<\epsilon_1+\epsilon_2$ for all $a\in R_2$ and $c\in P_1$ (here $R$ stands for ``return'' and $P$ for ``passage''). 

The idea of the proof is that return times associated with large indices are compatible with return times and passages associated with smaller indices. More precisely, inductively, suppose that we have defined $\delta_i$, $m_i$ and $n_i$ for all $1\leq i\leq l$ for some $l\in\mathbb{N}$, and for the set $R_i=\{n_i,n_i+m_i\}$ and $P_i=\{m_i\}$ we have that if $a=r_{j_1}+\dots +r_{j_l}$, with $r_{j_k}\in R_{j_k}$, $j_1<\ldots < j_l$, then   $\rho(T^ax,x) <\sum_{t=1}^{l}  \epsilon_{j_t} $, and $\rho(T^{a+c}x,y)< \epsilon_k + \sum_{t=1}^{l} \epsilon_{j_t} $ if $c\in P_k$, $k<j_1$.  

Let $0<\delta_{i+1}<\epsilon_{i+1}$ be such that $\rho(z,z')<\delta_{i+1}$ implies that $\rho(T^a z,T^a z')<\epsilon_{i+1}$ for all $|a|\leq |n_1|+|m_1|+\cdots+ |n_i|+|m_i|$. Then choose $n_{i+1}$ and $m_{i+1}$ such that $\rho(T^{n_{i+1}}x,x)< \delta_{i+1}$, $\rho(T^{n_{i+1}+m_{i+1}}x,x)< \delta_{i+1},$ and $\rho(T^{m_{i+1}}x,y)< \delta_{i+1}$, and set $R_{i+1}=\{n_{i+1}, n_{i+1}+m_{i+1}\}$, $P_{i+1}=\{m_{i+1}\}$.

We claim that if $a=r_{j_1}+r_{j_2}+\dots + r_{j_l}$, with $r_{j_k}\in R_{j_k}$, $j_1<\ldots < j_l\leq i+1$, then $\rho(T^ax,x)< \sum_{t=1}^{l}  \epsilon_{j_t} $, and $\rho(T^{a+c}x,y)< \epsilon_k + \sum_{t=1}^{l} \epsilon_{j_t} $ if $c\in P_k$, $k<j_1$. 

We only need to check the case $j_l=i+1$. Assume that $r_{i+1}=n_{i+1}$ (the case $r_{i+1}=n_{i+1}+m_{i+1}$ is identical). Since $\rho(T^{n_{i+1}}x,x)< \delta_{i+1}$, and $|a-n_{i+1}|\leq |n_1|+|m_1|+\dots +|n_i|+|m_i|$, we get $\rho(T^{a}x,T^{a-n_{i+1}}x)< \epsilon_{i+1}$. By induction, $\rho(T^{a-n_{i+1}}x,x)< \sum_{t=1}^{l-1}\epsilon_{j_t}$, so the triangle inequality implies that $\rho(T^ax,x)<\sum_{t=1}^{l}\epsilon_{j_t}$, as desired. The estimate of $\rho(T^{a+c}x,y)$
follows in a similar way. 

Now set $a_i=n_{i+1}+\sum_{j=1}^i (n_j+m_j)$. Note that we may further require the sequence $(a_i)_{i\in\N}$ to take infinitely many values (by choosing the $n_i$'s and $m_i$'s to go to infinity) and $a_{i+l}-a_i =n_{i+l+1} + \sum_{j=i+2}^{i+l} (n_{j}+m_j) +m_{i+1}$. 
Hence, we can rewrite this as $a_{i+l}-a_i=r_{i+l+1}+\sum_{j=i+2}^{i+l} r_j + c$, where $r_j\in R_j$ and $c\in P_{i+1}$. It follows from the construction of the sequence $\{n_i,m_i:i \in \N\}$ that $\rho(T^{a_{i+l}-a_i}x,y)<\sum_{t=i+1}^{i+l+1}\epsilon_i< \epsilon$, which implies that $T^{a_{i+l}-a_i}x\in U,$ as was to be shown.
\end{proof}

\begin{lemma} \label{lem:recurrence_RP}
 Let $(X,S_1,\dots,S_k)$ be a minimal $\Z^k$-system, $T_1,\ldots,T_d\in \langle S_1,\ldots, S_k\rangle$ and $\pi\colon X\to Y$ an open factor map with $R_{\pi}\subseteq \RP_{T_i,T_i}(X)$ for some $1\leq i\leq d$. Let  $x_{1},\dots,x_{d},x'_{i}\in X$ with $(x_{i},x'_{i})\in R_{\pi}$, and $U_{z}$be  a neighborhood of $z$ for $z=x_{1},\dots,x_{d},x'_{i}$. There exist $n\in\mathbb{Z}$ such that $T_{i}^{n}U_{x_{i}}\cap U_{x'_{i}}\neq \emptyset$ and $T_{j}^{n}U_{x_{j}}\cap U_{x_{j}}\neq \emptyset$ for all $1\leq j\leq d, j\neq i$.   
\end{lemma}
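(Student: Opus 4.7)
My plan is to realize the common $n$ via an IP-recurrence argument: the hypothesis on $R_\pi$ is used to produce a $\Delta$-set of times that witness the $T_i$-transition from $x_i$ toward $x_i'$, while topological multiple recurrence provides the simultaneous $T_j$-returns for $j\neq i$; the two are matched by combinatorial richness of $\Delta$-/IP-sets.

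\textbf{Step 1 (promoting the regional-proximal cube to an orbit statement).} By $(x_i,x_i')\in R_{\pi}\subseteq\RP_{T_i,T_i}(X)$ and \cref{basicq}(ii), we have $(x_i,x_i',x_i',x_i')\in\bQ_{T_i,T_i}(X)$, so there exist $z_n\to x_i$ and integers $p_n,q_n$ for which all of $T_i^{p_n}z_n,\,T_i^{q_n}z_n,\,T_i^{p_n+q_n}z_n$ tend to $x_i'$. The openness of $\pi$, together with $(x_i,x_i')\in R_{\pi}$, lets me lift, for each $n$, the base point $z_n$ to some $z_n'\to x_i'$ with $(z_n,z_n')\in R_{\pi}\subseteq\RP_{T_i,T_i}(X)$. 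Iterating this cube-lifting procedure on the pairs $(z_n,z_n')$, combined with a diagonal argument and the $\Z^k$-minimality of $X$, I aim to replace the free base point by $x_i$ itself and deduce
\[
(x_i,x_i',x_i)\;\in\;\overline{\mathcal{F}_{T_i,T_i}}(x_i);
\]
that is, for any $\varepsilon>0$ there are $p,q\in\Z$ with $T_i^p x_i$ and $T_i^{p+q}x_i$ within $\varepsilon$ of $x_i$, and $T_i^q x_i$ within $\varepsilon$ of $x_i'$.

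\textbf{Step 2 ($\Delta$-set of transition times and multiple recurrence).} With Step~1 in hand, \cref{lem:Delta_set} applied to $(X,T_i)$, $x_i$, $y=x_i'$ and $U=U_{x_i'}$ yields an infinite sequence $(a_l)_{l\in\N}$ such that
\[
D\;\coloneqq\;\{\,a_m-a_l\,:\,m>l\,\}\;\subseteq\;\{\,n\in\Z\,:\,T_i^n x_i\in U_{x_i'}\,\},
\]
so every $n\in D$ gives $T_i^n U_{x_i}\cap U_{x_i'}\ni T_i^n x_i$. To pick up the $T_j$-conditions, I consider the commuting homeomorphisms $\widehat{T}_j$ on $X^{d-1}$ (indexed by $j\neq i$), each acting as $T_j$ on the $j$-th coordinate and trivially on the others. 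Topological Furstenberg--Katznelson multiple recurrence (IP-version), applied to the non-empty open set $V=\prod_{j\neq i}U_{x_j}$, produces an IP$^{*}$-set
\[
E\;=\;\Bigl\{\,n\in\Z\,:\,V\cap\bigcap_{j\neq i}\widehat{T}_j^{-n}V\neq\emptyset\,\Bigr\}.
\]
Since every $\Delta$-set contains an IP-set, $D$ contains an IP-set $D'$; by the defining property of IP$^{*}$-sets, $D'\cap E\neq\emptyset$. Any $n\in D\cap E$ then simultaneously satisfies all required conditions: unpacking $\widehat{T}_j^n\bar{y}\in V$ with $\bar{y}\in V$ gives some $y_j\in U_{x_j}$ with $T_j^n y_j\in U_{x_j}$ for every $j\neq i$.

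\textbf{Main obstacle.} The delicate point is Step~1, namely passing from the generic cube in $\bQ_{T_i,T_i}(X)$ (whose base point is only close to $x_i$) to the orbit-of-$x_i$ statement $(x_i,x_i',x_i)\in\overline{\mathcal{F}_{T_i,T_i}}(x_i)$. The openness of $\pi$ together with $R_{\pi}\subseteq\RP_{T_i,T_i}(X)$ are exactly the ingredients that encode the fibered near-equicontinuity of $T_i$ needed to perform this lift, but the execution demands care in iterating the lifting procedure and in combining it with the $\Z^k$-minimality of $X$.
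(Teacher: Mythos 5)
Your overall architecture matches the paper's: produce a $\Delta$-set of times $n$ with $T_i^nU_{x_i}\cap U_{x_i'}\neq\emptyset$ via \cref{lem:Delta_set}, show the simultaneous-return set for the other coordinates is large enough to meet it, and intersect. However, both halves have genuine gaps. In Step 1 you try to prove $(x_i,x_i',x_i)\in\overline{\mathcal{F}_{T_i,T_i}}(x_i)$ for the \emph{given} point $x_i$, and you concede the argument (iterated cube-lifting plus a diagonal argument) is not carried out. This target is too strong: $(x_i,x_i')\in\RP_{T_i,T_i}(X)$ only gives $(x_i,x_i',x_i',x_i')\in\bQ_{T_i,T_i}(X)$, and the fiber $\{(a,b,c):(x_i,a,b,c)\in\bQ_{T_i,T_i}(X)\}$ coincides with $\overline{\mathcal{F}_{T_i,T_i}}(x_i)$ only for points in a residual set $\Omega_i$ (this is Glasner's lemma, \cite[Lemma 4.5]{Glasner_top_erg_decomposition:1994}), not for every point; diagonal arguments of the kind you sketch typically produce exactly such a residual set and nothing more. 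The correct move --- and the one the paper makes --- is to \emph{not} work with $x_i$ itself: since the conclusion only involves the neighborhoods $U_{x_i},U_{x_i'}$, one picks $\tilde{x}\in U_{x_i}\cap\Omega_i$ and uses the openness of $\pi$ to find $\tilde{y}\in U_{x_i'}$ with $(\tilde{x},\tilde{y})\in R_{\pi}$, and then applies \cref{lem:Delta_set} to $(\tilde{x},\tilde{y})$. Openness of $\pi$ is used precisely here, not to upgrade a generic statement to a pointwise one.

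In Step 2 the matching argument is broken: you assert that every $\Delta$-set contains an IP-set, so that the $\Delta$-set $D$ must meet the IP$^{*}$ set $E$. The standard containment goes the other way --- every IP-set contains a $\Delta$-set (take partial sums of the generators), whence every $\Delta^{*}$ set is IP$^{*}$ but not conversely --- and a general $\Delta$-set need not contain an IP-set, so $D\cap E\neq\emptyset$ does not follow from $E$ being IP$^{*}$. Moreover, invoking topological Furstenberg--Katznelson multiple recurrence for the maps $\widehat{T}_j$ is both inapplicable as stated (the joint $\widehat{T}_j$-action on $X^{d-1}$ is far from minimal, so the open-set form of that theorem does not apply to an arbitrary product open set $V$) and unnecessary: since each $\widehat{T}_j$ acts on a different coordinate, $V\cap\bigcap_{j\neq i}\widehat{T}_j^{-n}V=\prod_{j\neq i}\bigl(U_{x_j}\cap T_j^{-n}U_{x_j}\bigr)$, so $E$ is a \emph{single}-recurrence return set for the product transformation. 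Taking any $\Z^k$-invariant measure $\mu$ on $X$ (which has full support by minimality) and applying the pigeonhole proof of Poincar\'e recurrence to $\mu^{\otimes(d-1)}$ and $\prod_{j\neq i}U_{x_j}$ shows directly that $E$ is $\Delta^{*}$, i.e.\ meets every infinite $\Delta$-set, which is exactly what is needed to intersect it with $D$. This is the paper's argument; with these two repairs your proof becomes essentially the paper's proof.
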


\begin{proof}
The set $\Omega_i$ of $\tilde{x}\in X$ such that the set $\{(a,b,c):(\tilde{x},a,b,c)\in \bQ_{T_i,T_i}(X)
\}$ equals $\overline{\mathcal{F}_{T_{i},T_i}}(\tilde{x})$  is a dense $G_{\delta}$-set of points (see, for instance, \cite[Lemma 4.5]{Glasner_top_erg_decomposition:1994}). Since $\pi$ is open, we can find $\tilde{x}\in U_{x_i}\cap \Omega_{i}$ and $\tilde{y}\in U_{x_i'}$ with $(\tilde{x},\tilde{y})\in R_{\pi}$. Because $R_{\pi}\subseteq \RP_{T_i,T_i}(X)$, we have $(\tilde{x},\tilde{y},\tilde{x})\in \overline{\mathcal{F}_{T_{i},T_i}}(\tilde{x})$ and by \cref{lem:Delta_set}, the set $\{ n\in \Z: T_i^n \tilde{x} \in U_{x_i'}\}$ contains a set of the form $\{a_j-a_j':\;j>j'\}$ for some $\Z$-valued sequence $(a_i)_{i\in\N}$ taking infinitely many values. In particular, the same is true for the set $\{n\in \Z: T_i^n U_{x_i}\cap U_{x_i'}\neq \emptyset\}$. 
Let $\mu$ be a $\Z^k$-invariant measure on $X$. By the minimality of $\langle S_1, \ldots, S_k\rangle$,  $\mu$ has full support. Consider the product system $(X_1\times \cdots\times X_d,\mathcal{B}(X)^{\otimes d},\mu^{\otimes d},T_1\times \cdots\times  T_d)$ and $U=U_1\times\cdots\times U_d$. Then $\mu^{\otimes d}(U)>0,$ and so by the proof of the Poincar\'e recurrence theorem, the set $\{n\in\Z : \mu^{\otimes d}(U\cap (T_1\times \cdots\times T_d)^{-n}U)>0\}$ must intersect non-trivially every set of the form $\{a_j-a_j':\;j>j'\},$ which takes infinitely many values. This implies that it has non-empty intersection with $\{n\in \Z: T_i^n U_{x_i}\cap U_{x_i'}\neq \emptyset\}$, as desired.    
\end{proof}

\begin{proof}[Proof of \cref{lemma:product_RP}]
Fix $\epsilon>0$ and set $\epsilon_{d}\coloneqq\epsilon$. Suppose that we have constructed $\epsilon_{r+1},\dots,\epsilon_{d}>0$ for some $1\leq r\leq d-1$. We let $0<\epsilon_{r}<\epsilon_{r+1}/2$ to be a number such that for any $z_{r+1}\in X$ with $\rho(y_{r+1},z_{r+1})<\epsilon_{r}$, there exists $x'_{r+1}\in X$ with $\rho(x'_{r+1},x_{r+1})<\epsilon_{r+1}/2$ such that  $(x'_{r+1},z_{r+1})\in R_{\pi_{r+1}}$. The existence of such $\epsilon_{r}$ follows from the assumptions that $(x_{r+1},y_{r+1})\in R_{\pi_{r+1}}$,  the map $X\mapsto Y_{{r+1}}=X/R_{\pi_{r+1}}$ is open.

 For $1\leq r\leq d$,
we say that \emph{Property $r$}   holds if there exist
 $z_{1},\dots,z_{d}\in X$ and $n\in\mathbb{Z}$ such that
\begin{itemize}
\item $\rho(x_{i},z_{i})<\epsilon_{r}$ for all $1\leq i\leq r$;
\item $\rho(y_{i},z_{i})<\epsilon_{r}$ for all $r+1\leq i\leq d$;
\item $\rho(y_{i},T_{i}^{n}z_{i})<\epsilon_{r}$ for all $1\leq i\leq d$.
\end{itemize}

By \cref{lem:recurrence_RP}, Property 1 holds.
Now suppose that Property $r$ holds for some $1\leq r\leq d-1$.
Since $(x_{r+1},y_{r+1})\in R_{\pi_{r+1}}$ and $\rho(y_{r+1},z_{r+1})<\epsilon_{r}$, by the construction of $\epsilon_{r}$, there exists $x'_{r+1}\in X$ with $\rho(x'_{r+1},x_{r+1})<\epsilon_{r+1}/2$ such that  $(x'_{r+1},z_{r+1})\in R_{\pi_{r+1}}$.
Let $\delta'\coloneqq\min\{\epsilon_{r+1}/2, \epsilon_{r+1}-\epsilon_{r}\}$.
Take $0<\delta<\delta'$ such that for all $x,y\in X$, if $\rho(x,y)<\delta$, then $\rho(T^{n}x,T^{n}y)<\delta'$ ($n$ is the one from Property $r$ above).
By \cref{lem:recurrence_RP}, there exist 
$z'_{1},\dots,z'_{d}\in X$ and $n'\in\mathbb{Z}$ such that
\begin{itemize}
\item $\rho(z_{i},z'_{i})<\delta$ for all $1\leq i\leq d, i\neq r+1$;
\item $\rho(z_{i},T_{i}^{n'}z'_{i})<\delta$ for all $1\leq i\leq d, i\neq r+1$;
\item $\rho(x'_{r+1},z'_{r+1})<\delta$;
\item $\rho(z_{r+1},T_{r+1}^{n'}z'_{r+1})<\delta$.
\end{itemize} 

Then for all $1\leq i\leq r$, 
$$\rho(x_{i},z'_{i})\leq \rho(x_{i},z_{i})+\rho(z_{i},z'_{i})<\epsilon_{r}+\delta\leq\epsilon_{r+1}.$$
For all $r+2\leq i\leq d$, 
$$\rho(y_{i},z'_{i})\leq \rho(y_{i},z_{i})+\rho(z_{i},z'_{i})<\epsilon_{r}+\delta\leq\epsilon_{r+1}.$$
Moreover,
$$\rho(x_{r+1},z'_{r+1})\leq \rho(x_{r+1},x'_{r+1})+\rho(x'_{r+1},z'_{r+1})<\epsilon_{r+1}/2+\delta\leq\epsilon_{r+1}.$$
On the other hand, for all $1\leq i\leq d, i\neq r+1$, since $\rho(z_{i},T_{i}^{n'}z_{i})<\delta$, we have $\rho(T_{i}^{n}z_{i},T_{i}^{n+n'}z_{i})<\delta'$ and so
$$\rho(y_{i},T_{i}^{n+n'}z_{i})\leq \rho(y_{i},T_{i}^{n}z_{i})+\rho(T_{i}^{n}z_{i},T_{i}^{n+n'}z_{i})<\epsilon_{r}+\delta'\leq\epsilon_{r+1}.$$
Finally, since $\rho(z_{r+1},T_{r+1}^{n'}z_{r+1})<\delta$, we have that  $\rho(T_{r+1}^{n}z_{r+1},T_{r+1}^{n+n'}z_{r+1})<\delta'$ and so
$$\rho(y_{r+1},T_{r+1}^{n+n'}z_{r+1})\leq \rho(y_{r+1},T_{r+1}^{n}z_{r+1})+\rho(T_{r+1}^{n}z_{r+1},T_{r+1}^{n+n'}z_{r+1})<\epsilon_{r}+\delta'\leq\epsilon_{r+1}.$$
In conclusion, we have that  that Property $r+1$ holds. 

So it follows from induction that Property $d$ holds, which means that there exist $(z_{1},\dots,z_{d})\in X^{d}$ and $n\in\mathbb{Z}$ such that 
$$\rho((x_{1},\dots,x_{d}),(z_{1},\dots,z_{d}))< \epsilon \text{ and } \rho((y_{1},\dots,y_{d}),(T_{1}^{n}z_{1},\dots,T_{d}^{n}z_{d}))<\epsilon.$$
Since $\epsilon$ is arbitrary, we have that $((x_{1},\dots,x_{d}),(y_{1},\dots,y_{d}))\in\RP_{T_{1}\times\dots\times T_{d}}(X^{d})$.
\end{proof}

As a consequence of \cref{lemma:product_RP}, we have:

\begin{proposition}
\label{1sub}
    Let $(X,S_{1},\dots,S_{k})$ be a minimal $\mathbb{Z}^{k}$-system and $T_1,\ldots,T_d\in \langle S_1,\ldots,S_k\rangle$. Suppose that $(X,T_{1}),\dots,(X,T_{d})$ are transitive. Then $(X^{d},T_{1}\times \dots\times T_{d})$ is transitive if and only if $(Y^{d},T_{1}\times\dots\times T_{d})$ is transitive, where $Y=X/\bold{RP}_{\mathbb{Z}^{k},\mathbb{Z}^{k}}(X)$.
\end{proposition}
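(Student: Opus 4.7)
My plan is to prove both implications by exploiting the factor map $\pi : X \to Y$ and its compatibility with the product action. The \emph{easy direction} is almost immediate: the map $\pi \times \cdots \times \pi : X^d \to Y^d$ is a factor of the $\Z$-system $(X^d, T_1 \times \cdots \times T_d)$, so a dense $(T_1 \times \cdots \times T_d)$-orbit in $X^d$ projects to a dense one in $Y^d$.

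For the converse, assume $(Y^d, T_1 \times \cdots \times T_d)$ is transitive. I will apply Lemma~\ref{le:eqtr} to the $\Z$-system $(X^d, T_1 \times \cdots \times T_d)$: it suffices to exhibit, for every choice of non-empty open sets $U_1, \ldots, U_d, V_1, \ldots, V_d$ in $X$, an integer $n$ with $T_i^n U_i \cap V_i \neq \emptyset$ for all $1 \leq i \leq d$ simultaneously. The decisive observation is that the transitivity of each $(X, T_i)$ lets us apply Corollary~\ref{cor:transitive_RP}, giving $\RP_{T_i, T_i}(X) = \RP_{\Z^k, \Z^k}(X) = R_\pi$ for every $i$. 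This is exactly the hypothesis required to invoke Theorem~\ref{lemma:product_RP} with the common choice $\pi_i = \pi$, \emph{assuming $\pi$ is open}.

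Under the openness assumption, the argument proceeds as follows. The images $\pi(U_i)$ and $\pi(V_i)$ are non-empty open in $Y$, so transitivity of $Y^d$ delivers $n \in \Z$ together with $u_i \in U_i$ and $v_i \in V_i$ satisfying $\pi(T_i^n u_i) = \pi(v_i)$ for every $i$. Thus each pair $(T_i^n u_i, v_i)$ lies in $R_\pi = \RP_{T_i, T_i}(X)$, and Theorem~\ref{lemma:product_RP} yields
\[ ((T_1^n u_1, \ldots, T_d^n u_d), (v_1, \ldots, v_d)) \in \RP_{T_1 \times \cdots \times T_d}(X^d). \]
Choosing $\epsilon > 0$ small enough that $T_i^{-n}$ sends an $\epsilon$-neighborhood of $T_i^n u_i$ into $U_i$ for each $i$, the definition of the regionally proximal relation produces $w = (w_1, \ldots, w_d)$ within $\epsilon$ of $(T_1^n u_1, \ldots, T_d^n u_d)$ and $m \in \Z$ such that $T_i^m w_i \in V_i$. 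Then $w_i' := T_i^{-n} w_i$ lies in $U_i$ and satisfies $T_i^{n+m} w_i' = T_i^m w_i \in V_i$, so $n + m$ is the sought-for shift.

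The main obstacle is that $\pi : X \to Y$ need not be open, since the projection from a minimal $\Z^k$-system onto its maximal equicontinuous factor can fail openness in general. I would resolve this by invoking the O-diagram (Theorem~\ref{thm:O-diagram}) to replace $\pi$ with an open map $\tilde{\pi} : \tilde{X} \to \tilde{Y}$ built from almost one-to-one extensions $\tilde{\sigma} : \tilde{X} \to X$ and $\tilde{\tau} : \tilde{Y} \to Y$, run the same argument inside $\tilde{X}$, and push transitivity of $\tilde{X}^d$ down to $X^d$ via the $d$-fold product of $\tilde{\sigma}$. The structural checks needed for this reduction---that $(\tilde{X}, T_i)$ inherits transitivity, that $R_{\tilde{\pi}} \subseteq \RP_{T_i, T_i}(\tilde{X})$ (using that proximal extensions preserve the regionally proximal structure), and that transitivity of $Y^d$ can be traced up to $\tilde{Y}^d$---are where the bulk of the technical work concentrates.
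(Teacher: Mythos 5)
Your proposal is correct and follows essentially the same route as the paper: the easy direction via the factor map, and the converse by passing to the O-diagram to obtain an open $\tilde{\pi}$, identifying $\RP_{T_i,T_i}$ with $\RP_{\Z^k,\Z^k}=R_{\pi}$ via Corollary~\ref{cor:transitive_RP}, applying Theorem~\ref{lemma:product_RP} to lift density from $\tilde{Y}^d$ to $\tilde{X}^d$, and pushing transitivity back down. The structural checks you flag at the end (transitivity passing to almost one-to-one extensions, and $R_{\tilde{\pi}}\subseteq \RP_{T_i,T_i}(\tilde{X})$ via proximality) are exactly the ones the paper carries out.
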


\begin{proof} 
 The ``only if'' part is straightforward. Now assume that $(Y^d,T_1\times\dots\times T_d)$ is transitive. By the O-diagram (\cref{thm:O-diagram}), we may consider almost one-to-one extensions $\tilde{X}$, $\tilde{Y}$  of $X$ and $Y$ respectively such that the projection $\tilde{\pi}\colon \tilde{X}\to \tilde{Y}$ is open. Note that $(\tilde{X},T_1),\dots,(\tilde{X},T_d)$ and $(\tilde{Y}^d,T_1\times\dots\times T_d)$ are also transitive, because this property is preserved under almost one-to-one extensions (see \cite{Akin_Glasner_residual_properties_almost_equi:2001}). 
 We now show that $(\tilde{X}^d,T_1\times\dots\times T_d)$ is transitive, which implies that $(X^d,T_1\times\dots\times T_d)$ is transitive.

Note that since $\tilde{X}$ is an almost one-to-one extension of $X$, we have that $\tilde{X}/\RP_{\Z^k,\Z^k}(\tilde{X})$ and $X/\RP_{\Z^k,\Z^k}(X)$ are conjugate, so we have that $R_{\tilde{\pi}}$ is a subset of $\RP_{T_i,T_i}(\tilde{X})$ for all $i$. To see this, by only assuming that $\tilde{\sigma}$ is proximal (which covers the almost one-to-one case), let $q$ be the projection from $\tilde{X}$ to $X/\RP_{\Z^k,\Z^k}(X)$. It suffices to show that $R_{q}\subseteq \RP_{\Z^k,\Z^k}(\tilde{X})$. Let $\tilde{x},\tilde{x}'$ with $q(\tilde{x})=q(\tilde{x}')$. Then $(\tilde{\sigma}(\tilde{x}),\tilde{\sigma}(\tilde{x}'))\in \RP_{\Z^k,\Z^k}(X)$. We can find $(\tilde{y}, \tilde{y}')\in \RP_{\Z^k,\Z^k}(\tilde{X})$ such that $ (\tilde{\sigma}(\tilde{y}),\tilde{\sigma}(\tilde{y}'))=(\tilde{\sigma}(\tilde{x}),\tilde{\sigma}(\tilde{x}'))$. It follows that $(\tilde{x},\tilde{y}), (\tilde{x}',\tilde{y}') \in P(\tilde{X})$ (the proximal relation on $\tilde{X}$). Since $P(\tilde{X})\subseteq \RP_{\Z^k,\Z^k}(\tilde{X})$, and this is an equivalence relation, we conclude $(\tilde{x},\tilde{x}')\in \RP_{\Z^k,\Z^k}(\tilde{X})$.\footnote{ This should be a well-known result; we chose to present its short proof (which simplifies the one of \cite[Lemma~5.3]{Donoso_Durand_Maass_Petite_automorphism_low_complexity:2016} that covers the $\mathbb{Z}$ case and almost one-to-one extensions) for completeness.} 

Let $U,V$ be nonempty open subsets of $\tilde{X}^{d}$. Then $\tilde{\pi}^{\times d}(U)$ and $\tilde{\pi}^{\times d}(V)$ are nonempty open sets, where $\tilde{\pi}^{\times d}\coloneqq \pi\times\cdots\times\pi$ ($d$-times). Since $(\tilde{Y}^d,S_1\times\dots\times S_d)$ is transitive, there exist $(x_{1},\dots,x_{d})\in U$ and $n\in \Z$ such that $\tilde{\pi}^{\times d}(T_1^nx_1,\ldots,T_d^nx_{d})\in \tilde{\pi}^{\times d}(V)$. That is, there exists $(x'_{1},\dots,x'_{d})\in V$ such that $(T_i^nx_{i},x'_{i})\in R_{\tilde{\pi}}(\tilde{X})$ for all $1\leq i\leq d$. Let $\epsilon>0$ be such that $B((x_{1},\dots,x_{d}),\epsilon)\subseteq U$ and $B((x'_{1},\dots,x'_{d}),\epsilon)\subseteq V$. Take $\delta>0$ so that $\rho(a,b)<\delta$ implies $\rho(T_i^{-n}a,T_i^{-n} b)<\epsilon$ for $1\leq i\leq d$. By \cref{lemma:product_RP}, as $R_{\tilde{\pi}}(\tilde{X})\subseteq \RP_{T_i,T_i}(\tilde{X})$, we obtain $((T_1^nx_{1},\dots,T_d^nx_{d}),(x'_{1},\dots,x'_{d}))\in \RP_{T_1\times\dots\times T_d}(\tilde{X}^d)$. Therefore, there exist $(y_{1},\dots,y_{d})\in \tilde{X}^d$ and $m\in \Z$ such that $\rho((y_1,\ldots,y_d),$ $(T_1^nx_{1},\dots,T_d^nx_{d}))<\delta$ and $\rho((T_1^my_{1},\dots,T_d^my_{d}),(x_1',\ldots,x_d'))<\delta$. It follows that $(T_1^{-n}y_{1},\dots,$ $T_d^{-n}y_{d})\in U$ and $(T_1^{m+n}T_1^{-n}y_{1},\dots,T_d^{m+n}T_d^{-n}y_{d})\in V$.  Using \cref{le:eqtr}, we conclude that $(\tilde{X}^{d},T_{1}\times\dots\times T_{d})$ is transitive.
\end{proof}

\subsection{The proof of Theorem \ref{mainbb}}

In this last subsection, we prove \cref{mainbb}.   We start with its forward direction, which is almost straightforward.

\begin{proof}[Proof of the forward direction of \cref{mainbb}] 
We use \cref{le:eqtr} implicitly throughout the proof.
 Assume that $(T_1^n)_n,\ldots,(T_d^n)_n$ are jointly transitive. Equivalently, for all $V_{0},\dots,V_{d}$ nonempty and open subsets of $X,$ there exists $n\in\Z$ such that 
\begin{equation}\label{E:1}
V_{0}\cap T^{-n}_{1}V_{1}\cap\dots\cap T^{-n}_{d}V_{d}\neq \emptyset.
\end{equation} 
To show $(i)$, pick any $i\neq j,$ and let $U_0, U_1$ be non-empty opens sets. Setting $V_i=U_0,$ $V_j=U_1$ and $V_k=X$ for all $k\in \{0,\ldots,d\}\setminus \{i,j\},$ it follows from \eqref{E:1} that $T_i^{-n}V_i\cap T_j^{-n}V_j\neq \emptyset$, or $U_0\cap (T_i^{-1}T_j)^{-n} U_1\neq \emptyset$.
To show $(ii)$, pick any point $x$ for which $\{(T_1^n x, \ldots, T_d^n x):\;n\in \mathbb{Z}\}$ is dense in $X.$ Then the point $(x,\ldots,x)$ is a transitive point for $T_1\times \dots\times T_d$.
\end{proof}

It remains to show the inverse direction of \cref{mainbb}. To this end, we first need a couple of statements. In particular, the first one will allow us to run an inductive argument.

\begin{proposition}\label{trfg}
 Let $(X,S_{1},\ldots,S_{k})$ be a minimal $\mathbb{Z}^{k}$-system, $T_1,\ldots,T_d\in \langle S_1,\ldots,S_k\rangle$, and $R_{2}\coloneqq T_{2}T_{1}^{-1}, \ldots, R_{d}\coloneqq T_{d}T_{1}^{-1}$. If $(X,R_{2}), \ldots, (X,R_{d})$ and $(X^{d},T_{1}\times\dots\times T_{d})$ are transitive, then $(X^{d-1},R_{2}\times\dots\times R_{d})$ is transitive.
\end{proposition}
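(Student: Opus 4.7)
The plan is to reduce the problem to the maximal equicontinuous factor of $(X,S_1,\ldots,S_k)$ and then invoke \cref{1sub} applied to the $(d-1)$-tuple $R_2,\ldots,R_d$. Let $Y = X/\RP_{\mathbb{Z}^k,\mathbb{Z}^k}(X)$; by \cref{thm:factor_rotation}, $Y$ is topologically conjugate to a rotation on a compact abelian group, so every element of $\langle S_1,\ldots,S_k\rangle$ acts on $Y$ as a translation. Write the translation induced by $T_i$ as $y\mapsto y+t_i$; then $R_i$ acts on $Y$ as $y\mapsto y+(t_i-t_1)$.

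Since transitivity passes to factors, the hypothesis that $(X^d,T_1\times\cdots\times T_d)$ is transitive yields transitivity of $(Y^d,T_1\times\cdots\times T_d)$. For translations on a compact abelian group, this is equivalent to $(t_1,\ldots,t_d)$ generating a dense subgroup of $Y^d$. I now consider the continuous surjective group homomorphism
\[
\phi\colon Y^d\to Y^{d-1},\qquad \phi(y_1,\ldots,y_d)=(y_2-y_1,\ldots,y_d-y_1).
\]
Continuous surjective homomorphisms send dense subgroups to dense subgroups, so the subgroup generated by $\phi(t_1,\ldots,t_d)=(t_2-t_1,\ldots,t_d-t_1)$ is dense in $Y^{d-1}$. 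This says precisely that $(Y^{d-1},R_2\times\cdots\times R_d)$ is minimal, hence transitive.

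With this in hand, I apply \cref{1sub} to the $d-1$ transformations $R_2,\ldots,R_d\in\langle S_1,\ldots,S_k\rangle$: each $(X,R_i)$ is transitive by hypothesis, and $(Y^{d-1},R_2\times\cdots\times R_d)$ is transitive by the previous paragraph, so \cref{1sub} yields the transitivity of $(X^{d-1},R_2\times\cdots\times R_d)$, as required. There is no real obstacle in this argument: the key ``algebraic'' input—that the rotation vector for $R_2\times\cdots\times R_d$ is the image of the rotation vector for $T_1\times\cdots\times T_d$ under a continuous surjective homomorphism on the Kronecker factor—is essentially tautological once one passes to $Y$, and the lifting from the equicontinuous factor back to $X$ is precisely what \cref{1sub} was established for.
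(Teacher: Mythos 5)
Your argument is correct and follows the paper's proof in all essentials: both reduce to the maximal equicontinuous factor $Y=X/\RP_{\Z^k,\Z^k}(X)$ via \cref{1sub} applied to $R_2,\ldots,R_d$ (whose hypotheses hold since each $(X,R_i)$ is transitive), and then verify transitivity of $(Y^{d-1},R_2\times\cdots\times R_d)$ by an algebraic computation on the rotation. The only cosmetic difference is in that last step: the paper deduces minimality of $(Y^{d-1},R_2\times\cdots\times R_d)$ by a direct $\epsilon$-approximation from the minimality of $(Y^{d},T_1\times\cdots\times T_d)$, whereas you phrase the same computation as pushing the dense cyclic subgroup generated by $(t_1,\ldots,t_d)$ through the continuous surjective homomorphism $(y_1,\ldots,y_d)\mapsto(y_2-y_1,\ldots,y_d-y_1)$.
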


\begin{proof}
Since $(X,R_{2}),\ldots, (X,R_{d})$ are transitive, by \cref{cor:transitive_RP}, we have $\RP_{R_{2},R_{2}}(X)=\ldots=\RP_{R_{d},R_{d}}(X)=\RP_{\Z^k,\Z^k}(X)$, which is an equivalence relation since $(X,S_{1},\ldots,S_{k})$ is minimal.  By \cref{1sub}, it suffices to show that $(Y^{d-1},R_{2}\times\dots\times R_{d})$ is transitive, where $Y=X/\bold{RP}_{\mathbb{Z}^{k},\mathbb{Z}^{k}}(X)$. By \cref{thm:factor_rotation} we have that $(Y,S_1,\ldots,S_k)$ is a rotation on a compact abelian group, so we may write $T_i(y)=y+\alpha_i$ for $\alpha_i\in Y$ for $1\leq i \leq d$. Since $(X^{d},T_{1}\times\dots\times T_{d})$ is transitive, we get that $(Y^d,T_1\times\dots\times T_d)$ is transitive, hence minimal. (This holds because rotations are distal, and in this class, transitivity and minimality are equivalent conditions--e.g., see \cite[Chapters 2 and 5]{Auslander_minimal_flows_and_extensions:1988}.)

Take $y\in Y$ and $(y_2,\ldots,y_d)\in Y^{d-1}$. Since $(Y^d,T_1\times \cdots \times T_d)$ is minimal, given $\epsilon>0$, there exists $n$ such that $\rho(y+n\alpha_1,y)<\epsilon$, and $\rho(y+n\alpha_i, y_i)<\epsilon$, for all $2\leq i \leq d$.  
It follows that $\rho(y+n(\alpha_i-\alpha_1) , y_i)<2\epsilon$ for all $2\leq i \leq d$, and since $y,y_2,\ldots,y_d$ are arbitrary, we get that $(Y^{d-1},R_2\times \cdots\times R_d)$ is minimal. \cref{1sub} allows us to conclude.  
\end{proof}

The following lemma is a generalization of \cite[Lemma 2.9]{Cao_Shao_top_mild_mixing_poly:2022} (see also \cite[Lemma 3]{Kwietniak_Oprocha_wm_minimality_disjointness:2012}).

\begin{lemma}\label{lemma:transive_shrinking}
Let $(X,S_{1},\dots,S_{k})$ be a $\mathbb{Z}^{k}$-system and $T_1,\ldots,T_d\in \langle S_1,\ldots,S_k\rangle$. Let $(R_{j})_{1\leq j\leq N}$ be a finite sequence of continuous maps from $X$ to $X$.
Assume that $(X^{d},T_{1}\times\dots\times T_{d})$ is transitive. Then for all non-empty open sets $V_{1},\dots,V_{d}$, there exists $n_{j}\in\Z, 1\leq j\leq N$, and for each $1\leq i\leq d$ a   non-empty open subset $\tilde{V}_{i}$ of $V_{i}$ such that 
$$T_{i}^{-n_{j}}R^{-1}_{j}\tilde{V}_{i}\subseteq V_{i} \text{ for all } 1\leq j\leq N, 1\leq i\leq d.$$
\end{lemma}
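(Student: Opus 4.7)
The plan is to proceed by induction on $N$.

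For the inductive step, assuming the lemma for $N-1$ maps, first apply it to $R_1,\dots,R_{N-1}$ with the input sets $V_1,\dots,V_d$, producing integers $n_1,\dots,n_{N-1}$ and non-empty open sets $\hat V_i\subseteq V_i$ satisfying $T_i^{-n_j}R_j^{-1}\hat V_i\subseteq V_i$ for all $j\le N-1$ and $i$. Then invoke the base case ($N=1$) applied to the map $R_N$ with input sets $\hat V_1,\dots,\hat V_d$, obtaining $n_N\in\Z$ and non-empty open $\tilde V_i\subseteq\hat V_i$ with $T_i^{-n_N}R_N^{-1}\tilde V_i\subseteq\hat V_i\subseteq V_i$. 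Since $\tilde V_i\subseteq\hat V_i$, preimage monotonicity yields $T_i^{-n_j}R_j^{-1}\tilde V_i\subseteq T_i^{-n_j}R_j^{-1}\hat V_i\subseteq V_i$ for every $j\le N-1$, completing the inductive step.

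For the base case $N=1$, write $R=R_1$ and assume $R^{-1}(V_i)\ne\emptyset$ for every $i$ (otherwise $R^{-1}\tilde V_i=\emptyset$ for any $\tilde V_i\subseteq V_i$, so that coordinate is trivial). Apply the transitivity of $(X^d,T_1\times\cdots\times T_d)$ to the non-empty open sets $R^{-1}(V_1)\times\cdots\times R^{-1}(V_d)$ and $V_1\times\cdots\times V_d$ in $X^d$: there exists $m\in\Z$ with
\[
(T_1\times\cdots\times T_d)^m\bigl(R^{-1}(V_1)\times\cdots\times R^{-1}(V_d)\bigr)\cap\bigl(V_1\times\cdots\times V_d\bigr)\ne\emptyset.
\]
Picking a point in this intersection and using continuity together with the product topology, we obtain non-empty open sets $A_i\subseteq R^{-1}(V_i)$ with $T_i^m A_i\subseteq V_i$, equivalently $A_i\subseteq T_i^{-m}V_i$. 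Now take $\tilde V_i$ to be any non-empty open subset of $V_i\setminus R(X\setminus A_i)$; this set equals $V_i\cap\bigl(X\setminus R(X\setminus A_i)\bigr)$, which is open because $R(X\setminus A_i)$ is a continuous image of the compact set $X\setminus A_i$, hence closed. For any $v\in\tilde V_i$ one has $R^{-1}(v)\cap(X\setminus A_i)=\emptyset$, so $R^{-1}(\tilde V_i)\subseteq A_i\subseteq T_i^{-m}V_i$; equivalently $T_i^m R^{-1}(\tilde V_i)\subseteq V_i$, so setting $n_1:=-m$ finishes the base case.

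The main obstacle is ensuring that $V_i\setminus R(X\setminus A_i)$ is non-empty. In the intended application to \cref{mainbb} the maps $R_j$ arise from compositions of the $T_i^{\pm1}$ and are hence homeomorphisms, in which case injectivity and openness give $R(A_i)\cap R(X\setminus A_i)=\emptyset$ with $R(A_i)\subseteq V_i$ open, so one may simply take $\tilde V_i:=R(A_i)$. For general non-injective continuous $R_j$ additional care (shrinking $A_i$ further, or selecting $v\in V_i$ with a manageable fiber) is needed, but the overall structure---induction on $N$ reducing to a single base step that synchronizes the coordinates via transitivity of $(X^d,T_1\times\cdots\times T_d)$---is the same.
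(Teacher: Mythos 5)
Your proof is correct and follows essentially the same route as the paper's: induction on $N$, where each step applies transitivity of $(X^{d},T_{1}\times\cdots\times T_{d})$ to the pair of open sets $R^{-1}V_{1}\times\cdots\times R^{-1}V_{d}$ and $V_{1}\times\cdots\times V_{d}$ to produce a time $m$, and then shrinks each $V_{i}$ to $V_{i}\cap R T_{i}^{-m}V_{i}$ (when $R$ is invertible your $\tilde V_{i}=R(A_{i})$ is exactly this set, which is the paper's $V_{i}^{(N)}$).

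The obstacle you flag at the end is genuine, and no amount of additional care will remove it: for non-injective continuous $R$ the lemma is false as stated. Take $d=1$, $X=\T$, $T_{1}$ an irrational rotation, $N=1$, $R(x)=2x$ and $V=(0,1/4)$. For every non-empty $\tilde V\subseteq V$ the preimage $R^{-1}\tilde V$ contains pairs of points at distance exactly $1/2$, while $T_{1}^{-n}$ is an isometry and $V$ has diameter $1/4$, so $T_{1}^{-n}R^{-1}\tilde V\subseteq V$ is impossible for any $n$. The paper's own argument also tacitly uses injectivity: the set $(T_{i}^{-n_{N}}R_{N}^{-1})^{-1}V_{i}$ is only guaranteed to be open, and the cancellation $R_{N}^{-1}R_{N}=\mathrm{id}$ only valid, when $R_{N}$ is a homeomorphism. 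Since in the proof of \cref{mainbb} the lemma is only invoked with $R_{j}\in\langle S_{1},\ldots,S_{k}\rangle$, nothing is lost, but the hypothesis ``continuous maps'' should indeed be read as ``homeomorphisms,'' exactly as you suspected; your argument is then complete.
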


\begin{proof}
We use induction on $N$.
Since $(X^{d},T_{1}\times\dots\times T_{d})$ is transitive, there exists $n_{1}\in\Z$ such that 
$T_{i}^{-n_{1}}R^{-1}_{1}V_{i}\cap V_{i}\neq \emptyset$ for all $1\leq i\leq d$. Set $\tilde{V}_i=V^{(1)}_{i}\coloneqq T_{i}^{-n_{1}}R^{-1}_{1}V_{i}\cap V_{i}$. This completes the proof for the case $N=1$.

Now assume that for some $N\geq 2$, we have constructed $n_{1},\dots,n_{N-1}\in\mathbb{N}$ with $n_{1}<\ldots<n_{N-1}$, and for each $1\leq i\leq d$ a  sequence of non-empty open sets $V_{i}\supseteq V^{(1)}_{i}\supseteq \ldots\supseteq V^{(N-1)}_{i}$
such that 
$$T_{i}^{-n_{j}}R^{-1}_{j}V^{(m)}_{i}\subseteq V_{i} \text{ for all } 1\leq j\leq m, 1\leq m\leq N-1, 1\leq i\leq d.$$
Let $U_{i}:=R^{-1}_{N}V_{i}^{(N-1)}$. Since $(X^{d},T_{1}\times\dots\times T_{d})$ is transitive, there exists $n_{N}\in\mathbb{N}$ with $n_{N}>n_{N-1}$ such that
$T_{i}^{-n_{N}}U_{i}\cap V_{i}\neq \emptyset$ for all $1\leq i\leq d$.
This implies that
$$V_i\cap T_{i}^{-n_{N}}R^{-1}_{N}V_{i}^{(N-1)}=V_i\cap T_{i}^{-n_{N-1}}U_{i}\neq \emptyset.$$
Let
$$V_{i}^{(N)}:=V_{i}^{(N-1)}\cap (T_{i}^{-n_{N}}R^{-1}_{N})^{-1}V_{i}.$$
Then $V_{i}^{(N)}\subseteq V_{i}^{(N-1)}$ is an non-empty open set and
$T_{i}^{-n_{N}}R^{-1}_{N}V^{(N)}_{i}\subseteq V_{i}$. Since $V_{i}^{(N)}\subseteq V_{i}^{(N-1)}$, we also have that 
$$T_{i}^{-n_{j}}R^{-1}_{j}V^{(N)}_{i}\subseteq V_{i} \text{ for all } 1\leq j\leq N-1, 1\leq i\leq d.$$
This completes the induction step, and we are done by setting $\tilde{V}_{i}:=V_{i}^{(N)}$.
\end{proof}

\begin{proof}[Proof of the inverse direction of \cref{mainbb}]
There is nothing to prove when $d=1$. Now we assume that Theorem \ref{mainbb} holds for $d-1$ for some $d\geq 2$ and we prove it for $d$.
By \cref{trfg}, conditions (i) and (ii) imply that $(X^{d-1},T_2T_1^{-1}\times\dots\times T_{d}T_{1}^{-1})$ is transitive. On the other hand, we have that $(T_{i}T_{1}^{-1})^{-1}(T_{j}T_{1}^{-1})=T_{i}^{-1}T_{j}$ is transitive for all $1\leq i,j\leq d, i\neq j$. So, by induction hypothesis, we have that $((T_{2}T_{1}^{-1})^{n})_n,\ldots,((T_{d}T_{1}^{-1})^{n})_n$ are jointly transitive.

For $m=(m_1,\dots,m_k)\in \Z^k$, let $S_{m}B:=S_1^{m_1}\cdot\ldots\cdot S_k^{m_k}B$.  
  Let $U,V_{1},\dots,V_{d}$ be open and nonempty. 
We wish to show that there exists $n\in\Z$ such that
$$U\cap T_{1}^{-n}V_{1}\cap\dots\cap T^{-n}_{d}V_{d}\neq \emptyset.$$

Since $(X,S_{1},\dots,S_{k})$ is minimal,  there exists a finite set $F\subseteq \Z^k$ such that $X=\bigcup_{r\in F} S_{r}U$.  
By assumption (ii) and \cref{lemma:transive_shrinking}, there exist non-empty open sets $\tilde{V}_1,\dots,\tilde{V}_d$ and  for all $r\in F$ some $n_r\in \Z$, such that 
\[ (T_{1}\times\dots \times T_{d})^{n_{r}} (S_{k}\times\dots\times S_{k} (\tilde{V}_1\times\dots\times \tilde{V}_d))\subseteq V_1 \times\dots\times V_d\]
for all $r\in F$.
Since
$(T_{2}T_{1}^{-1})^{n},\dots,(T_{d}T_{1}^{-1})^{n}$ are jointly transitive, 
 we can find $m=m(F)\in \Z$ such that
\[ \tilde{V}_1 \cap (T_{2}T_1^{-1})^{-m}\tilde{V}_2\cap\dots\cap (T_{d}T_1^{-1})^{-m}\tilde{V}_d \neq \emptyset. \]
Take $x\in \tilde{V}_1$ such that $T_i^mT_1^{-m}x\in \tilde{V}_i$ for all $2\leq i\leq d$, and write $y=T_1^{-m}x$. Let $r\in F$ be such that $z:=S_{r}y\in U$. Then $T_{i}^{m+n_r}z=T_i^{m+n_r}S_{r}y=T_i^{n_r}S_{r}(T_{i}T_{1}^{-1})^{m}x\in T_i^{n_r}S_{r}(\tilde{V}_{i})\subseteq V_{i}$ for all $1\leq i\leq d$.
It follows that $z\in U\cap T_1^{-(m+n_r)}V_1\cap\dots\cap T_d^{-(m+n_r)}V_d$. 
\end{proof}

We close this article with the following widely open problem. Due to Theorem~\ref{mainbb} and recent developments in the theory of topological factors, we believe that there will be numerous results in the joint transitivity problem in the near future.

\begin{problem}\label{Pr:1}
Obtain the analogous to Theorem~\ref{mainbb} joint transitivity characterizations for iterates that come from polynomial and Hardy field of polynomial growth functions, for which the corresponding results in the measure theoretic setting are known.
\end{problem}

\medskip

\noindent {\bf{Acknowledgments.}} After sharing our findings with D. Charamaras, F. K. Richter, and K. Tsinas, we were informed that, by using elementary methods, they had obtained a special case of Theorem~\ref{mainbb}. Given the distinct nature of the methods, the two groups have decided not to publish their results together.

\end{document}